\tikzset{cross/.style={cross out, draw=black, minimum size=2*(#1-\pgflinewidth), inner sep=0pt, outer sep=0pt},
cross/.default={1pt}}
\newcommand\numberthis{\addtocounter{equation}{1}\tag{\theequation}}
\DeclareMathOperator{\re}{\mathbb{R}e}
\DeclareMathOperator{\im}{\mathbb{I}m}
\DeclareMathOperator{\sgn}{\mbox{sgn}}
\newcommand{\INF}{{\infty}}
\newcommand{\supp}{\mbox{supp}}
\newcommand{\tta}{\theta}
\newcommand{\OM}{\Omega}
\newcommand{\sph}{{{\mathbf S}^ 1}}
\newcommand{\del}{\partial}
\newcommand{\Gam}{\varGamma}
\newcommand{\ol}{\overline}
\newcommand{\ds}{\displaystyle}
\newcommand{\dba}{\overline{\partial}}
\newcommand{\BR}{\mathbb{R}}
\newcommand{\BC}{\mathbb{C}}
\newcommand{\BZ}{\mathbb{Z}}
\newcommand{\BN}{\mathbb{N}}
\newcommand{\bu}{{\bf u}}
\newcommand{\bg}{{\bf g}}
\newcommand{\bF}{{\bf F}}
\newcommand{\bzero}{\mathbf 0}
\newcommand{\btheta}{\boldsymbol \theta}
\newcommand{\bomega}{\boldsymbol \omega}
\newcommand{\jpn}{\langle n\rangle}
\newcommand{\jpk}{\langle k\rangle}
\newcommand{\jpj}{\langle j\rangle}
\newcommand{\B}{\mathcal{B}}
\newcommand{\HT}{\mathcal{H}}
\newcommand{\lnorm}[1]{ \left\| #1 \right\|}
\newcommand{\commentK}[1]{\par\noindent\textcolor{blue}{\textbf{Kamran: \textit{#1}}}\par}
\newtheorem{theorem}{Theorem}[section]
\newtheorem{prop}{Proposition}[section]
\newtheorem{lemma}{Lemma}[section]
\newtheorem{cor}{Corollary}[section]
\newtheorem{remark}{Remark}[section]
\numberwithin{equation}{section}
\title[On the range of the planar $X$-ray transform]{On the range of the planar $X$-ray transform on the Fourier lattice of the torus}
\begin{document}
\date{\today}
\author{Kamran Sadiq}
\address{Faculty of Mathematics, University of Vienna, Oskar-Morgenstern-Platz 1, 1090 Vienna, Austria}
\email{kamran.sadiq@univie.ac.at}
\author{Alexandru Tamasan}
\address{Department of Mathematics, University of Central Florida, Orlando, 32816 Florida, USA}
\email{tamasan@math.ucf.edu}
\makeatletter
\@namedef{subjclassname@2020}{\textup{2020} Mathematics Subject Classification}
\makeatother
\subjclass[2020]{Primary 44A12, 35J56; Secondary 45E05} 
\keywords{$X$-ray transform, Radon transform, fan-beam coordinates, Gelfand-Graev-Helgason-Ludwig moment conditions,
	 $A$-analytic maps, Hilbert transform}
\maketitle

\begin{abstract}
We find necessary and sufficient conditions on the Fourier coefficients of a function $g$ on the torus to be in the range of the $X$-ray transform of functions with compact support in the plane, and establish the connection between the range characterization based on the Bukhgeim-Hilbert transform and the classical Gelfand-Graev, Helgason, and Ludwig characterization.
\end{abstract}

\section{Introduction}

We revisit the range characterization of the classical $X$-ray transform of a real valued function $f$ compactly supported in the plane. Since the $X$-ray and Radon transform \cite{radon1917} for planar functions differ merely by the way lines are parameterized, the necessary and sufficient constraints have been long established independently by Gelfand and Graev \cite{gelfandGraev}, Helgason  \cite{helgason65}, and Ludwig \cite{ludwig}. Due to their practical use in noise reduction \cite{yuWang07, yuWangEtall06}, completion of the data \cite{xia_etall, gompelDelfriseDyck, karp_etal, kudoSaito},  CT-hardware failure diagnosis \cite{patch},   the range characterization problem has been a continuing subject of research
\cite{chanLeng05,clackdoyle13, kazantsevBukhgeimJr04,kazantsevBukhgeimJr06, monard16}. 
Some recent work treat directly the discrete $X$-ray transform problem \cite{donsubPreprint}. 
Models which account for the attenuation have also been considered in the homogeneous case \cite{kuchmentLvin}, and in the non-homogeneous case in the breakthrough works \cite{ABK, novikov01,novikov02}, and subsequently \cite{natterer01, bomanStromberg, bal04,kazantsevBukhgeimJr07,sadiqTamasan01,monard17}. 
The references here are by no means exhaustive.

The corresponding problem for tensors of higher order in non-Euclidean spaces has also been considered in  \cite{vladimirBook,pestovUhlmann, AMU,venke20}, see \cite{paternainSaloUhlmann14} for a comprehensive review. 
On simple Riemannian surfaces, the range characterization of the geodesic $X$-ray transform of compactly supported functions has been established in terms of the scattering relation in the breakthrough work in \cite{pestovUhlmann}.  The connection between the Euclidean version of the characterization in \cite{pestovUhlmann} and the original characterization in \cite{gelfandGraev, helgason65, ludwig} was established in \cite{monard16}. 

In here we give yet another characterization of the range of the classical $X$-ray transform in terms of the Fourier coefficients of integrable functions on the torus, where the lines passing through the support of $f$ are parameterized by coordinates on a torus. Although $Xf$  is a function on the torus, our problem differs from the one in \cite{ilmavirta_etal}, where for a given direction (of rational slope) the integration takes place over a finite union of parallel segments in the unit disc. 

Apart from the symmetry constraints due to the double parameterization of the lines, of specific interest are the moment  conditions. 
The thrust of this work are the constraints \eqref{RTCond} replacing the moment conditions in \cite{gelfandGraev,helgason65,ludwig}, and the sufficiency part in Theorem \ref{RangeCharac}. The result is based on the authors' characterization in \cite{sadiqTamasan01} and uses a new mapping property (Theorem \ref{newmapping_Hilbert}) of the Bukhgeim-Hilbert transform corresponding to the $A$-analytic maps in the sense of \cite{bukhgeimBook}. 

All the details establishing notation and the statement of the main results are in Section \ref{sec:prelim}. In Section \ref{sec:L2map} we briefly recall existing results on $A$-analytic maps that are used in the proofs. In Section \ref{sec:newBHprop} we establish a new mapping property of the Bukhgeim-Hilbert transform, which is key to the proof of our main result in Section \ref{Sec:pf_mainTh}. In Section 6 we provide the missing connection between the original characterization in \cite{gelfandGraev, helgason65, ludwig} and the one in \cite{sadiqTamasan01}. To improve the readability of the work, some of the claims are proven in the appendix.


\section{Preliminaries and statement of main result}\label{sec:prelim}

Throughout,  $f$ is an integrable,  real valued  function, of compact support in the plane. Points $(x_1, x_2)$ in the plane are identified by the complex numbers $ x_1+ i x_2$, and directions $\btheta = (\cos \theta, \sin \theta)$ in the unit sphere $\sph$ by $e^{ i \theta}$. Upon a translation and scaling, $f$ is assumed supported in the unit disc $$\OM = \{ z  \in \BC : |z| <1  \}.$$ The boundary $\Gam$ of $\OM$ is the unit circle, but we keep this notation to differentiate from the set $\sph$ of directions. 

Lines $\ds L_{(\beta,\theta)}:=\{ e^{i\beta}+s e^{i\theta}: \; s\in\BR\}$ intersecting $\overline\OM$ are parametrized in coordinates $\ds \{(e^{i\beta},e^{i\theta}): \beta,\theta\in(-\pi,\pi]\}$, and
the $X$-ray transform of $f$\begin{align}\label{eq:xrayTransform}
Xf (e^{i \beta},e^{i \theta})= \int_{L_{(\beta,\theta)}}f ds 
\end{align}is understood as a function on the torus  $\Gam\times\sph$.

Since $\displaystyle L_{(\beta,\theta)}=L_{(2\theta -\beta-\pi,\theta)}= L_{(\beta,\theta+\pi)}=L_{(2\theta -\beta-\pi,\theta+\pi)},$ the set of lines intersecting $\overline\OM$ are quadruply covered when $(e^{i\beta},e^{i\theta})$ ranges over the entire torus $ \Gam\times \sph$. Moreover, the following symmetries are satisfied,
\begin{align}
&Xf (e^{i \beta},e^{i \theta})=Xf (e^{i(2\theta -\beta-\pi)},e^{i( \theta+\pi)}), \text{ and }
\label{Xray_symm*}\\
&Xf (e^{i \beta},e^{i \theta})=Xf (e^{i \beta},e^{i (\theta+\pi)}), \text{ for }  (e^{i \beta},e^{i \theta})\in\Gam\times\sph \label{X_ray_sym**};
\end{align}
see Figure \ref{fig:fanbeam1} below.

\begin{figure}[ht]
 \centering

  \pgfmathsetmacro{\Radius}{1.35} 
  \pgfmathsetmacro{\RRadius}{1.5*\Radius}
    \pgfmathsetmacro{\RRadiustheta}{0.67*\Radius}
  \pgfmathsetmacro{\Height}{1.25}
  \pgfmathsetmacro{\startAngle}{asin(\Height/\Radius)}
  
   \pgfmathsetmacro{\ptAngle}{\startAngle}
   \pgfmathsetmacro{\alphaAngle}{0.6*\startAngle}
   \pgfmathsetmacro{\thetaAngle}{\startAngle + \alphaAngle} 
   \pgfmathsetmacro{\gammaAngle}{\startAngle + 2*\alphaAngle-180} 
   \pgfmathsetmacro{\omegaAngle}{\thetaAngle - 90} 
   \pgfmathsetmacro{\xbpt}{cos(\ptAngle)*\Radius}
   \pgfmathsetmacro{\ybpt}{sin(\ptAngle)*\Radius} 
   
   \pgfmathsetmacro{\xpt}{cos(\ptAngle)*1.1*\RRadius}
   \pgfmathsetmacro{\ypt}{sin(\ptAngle)*1.1*\RRadius} 
   
   \pgfmathsetmacro{\Lxpt}{\xbpt + cos(\thetaAngle)}
   \pgfmathsetmacro{\Lypt}{\ybpt + sin(\thetaAngle)}
   
   \pgfmathsetmacro{\Lxxpt}{\xbpt - cos(\thetaAngle)*2.1*\Radius}
   \pgfmathsetmacro{\Lyypt}{\ybpt - sin(\thetaAngle)*2.1*\Radius}
   
   \pgfmathsetmacro{\omegaxpt}{cos(\omegaAngle)*\RRadius}
   \pgfmathsetmacro{\omegaypt}{sin(\omegaAngle)*\RRadius}

   \pgfmathsetmacro{\gammaxpt}{cos(\gammaAngle)*\Radius}
   \pgfmathsetmacro{\gammaypt}{sin(\gammaAngle)*\Radius}

\begin{tikzpicture}[scale=1.5,cap=round,>=latex]

\tikzset{
    thick/.style=      {line width=0.8pt},
    very thick/.style= {line width=1.1pt},
    ultra thick/.style={line width=1.6pt}
}

  \coordinate[label=below:$0$] (O) at (0,0,0);
  \filldraw[black] (O) circle(1.2pt);
  \draw[thick] (O) circle (\Radius cm);
  
  \coordinate[label=above:$\Omega$] (OM) at(120:0.85cm);
  
  \coordinate (E) at (\RRadius,0,0);
  \draw[dashed] (O) -- (E);

\coordinate (E1) at (\Radius,0,0);
\coordinate (E2) at (\RRadiustheta,0,0);
  
  \coordinate (P) at (\xbpt,\ybpt);
  \filldraw[black] (P) circle(1.2pt);
  
  \coordinate[label=right:$e^{ i \beta}$] (Z) at (\xbpt+0.025,\ybpt+0.1);

  \coordinate (P1) at (\gammaxpt,\gammaypt);
  \filldraw[black] (P1) circle(1.2pt);
  \draw[dashed] (O) -- (P1);

  \coordinate[label=above:$\nu$] (nu) at (\xpt,\ypt);
  \draw[thick,->] (O) -- (nu);
  
  \coordinate[label=right:$e^{i (2\theta -\beta-\pi)}$] (Z1) at (\gammaxpt+0.05,\gammaypt+0.05);

  \coordinate[label=above:$\btheta$] (L) at  (\Lxpt,\Lypt);
  \draw[thick,->] (P) -- (L);
  
  \coordinate (LL) at  (\Lxxpt,\Lyypt);
  \draw[thick,-] (P) -- (LL);
  
  
    \tikzset{
	position label/.style={
	  above = 3pt,
	  text height = 1.5ex,
	  text depth = 1ex
	},
      brace/.style={
	decoration={brace,mirror},
	decorate
      }
    }  
  \pic[draw,->,angle radius=.7cm,angle eccentricity=1.3,"$\beta$"] {angle=E--O--P}; 
    \pic[draw,->,angle radius=.55cm,angle eccentricity=1.29,"$\theta$"] {angle=E1--E2--P}; 
  \pic[draw,->,angle radius=.7cm,angle eccentricity=1.3,"$\alpha$"] {angle=nu--P--L}; 
  \pic[draw,->,angle radius=.7cm,angle eccentricity=1.3,"$\alpha$"] {angle=O--P--LL};
 \pic[draw,-,angle radius=.7cm,angle eccentricity=1.3,"$\alpha$"] {angle=P--P1--O};

\end{tikzpicture}
 \caption{Fan-beam coordinates: $e^{i \beta}\in\Gam$,  $e^{i\theta}\in\sph$, and $\btheta = (\cos \theta, \sin \theta)$.
} \label{fig:fanbeam1}
\end{figure}
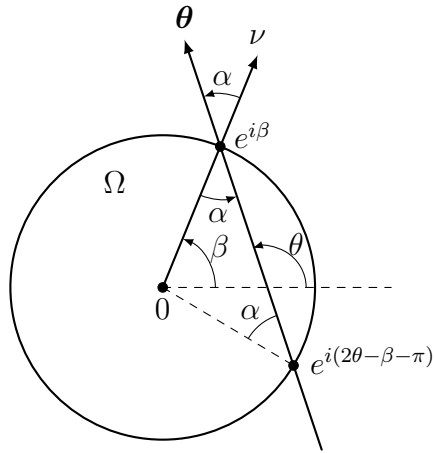

If $f$ is merely integrable in $\OM$, then $Xf$ may not be integrable on the torus. However, if  either \begin{align}\label{eq:f_refularity_cond1}
	\textnormal{supp }f  \subset \OM, 
	\mbox{ or }  f \in L^p(\OM)\mbox{ for some }p >2,
	\end{align}
	then $Xf \in L^1(\Gamma\times\sph)$; see Proposition \ref{prop:fLp_regularity} in the appendix.
	
We consider the partition of the torus into three parts: the ``outflux" part
\begin{equation}\label{eq:Gam+}
\Gam_+:=
\left \{(e^{i \beta} ,e^{i(\beta+\alpha )} )\in \Gam \times\sph:\,  \beta\in (-\pi, \pi] , \; |\alpha|<\frac{\pi}{2} \right \},
\end{equation}the  ``influx" part
\begin{equation}\label{eq:Gam-}
\Gam_-:=
\left \{(e^{i \beta} ,e^{i(\beta+\alpha )} )\in \Gam \times\sph:\,  \beta\in (-\pi, \pi] , \; \frac{\pi}{2} < |\alpha|\leq\pi  \right \},
\end{equation}and the (Lebesgue negligible) variety $\Gam_0:=(\Gam\times\sph)\setminus(\Gam_+\cup\Gam_-)$ parameterizing the tangent lines to the circle; see Figure \ref{fig:fanbeam1}.



Motivated by \eqref{Xray_symm*}, let $L^1_{sym}(\OM\times\sph)$ denote the space of integrable functions $g$ on the torus satisfying the symmetry relation
\begin{align}\label{sym*}
g (e^{i \beta},e^{i \theta})=g (e^{i(2\theta -\beta-\pi)},e^{i( \theta+\pi)}),\text{ for a.e. } (e^{i \beta},e^{i \theta})\in\Gam\times\sph.
\end{align}
Since 
$\left(e^{i\beta},e^{i \theta}\right)$ and $\left(e^{i(2\theta-\beta-\pi)},e^{( \theta+\pi)}\right)$ are either both in $\Gam_+$, or both in $\Gam_-$, we can consider  the spaces  $L^1_{sym}(\Gam_\pm)$ of integrable functions on the half-tori  satisfying \eqref{sym*}. Clearly,  $\ds g\in L^1_{sym}(\Gam\times\sph)$ if and only if its restrictions $\ds g|_{\Gam_\pm}\in L^1_{sym}(\Gam_\pm)$. The symmetry relation \eqref{Xray_symm*} yields $Xf\in L^1_{sym}(\Gam\times\sph)$. 

Furthermore, we consider the subspace $L^1_{sym,odd}(\Gam\times\sph)$ of functions $g\in L^1_{sym}(\Gam\times\sph)$, which, in addition to satisfying \eqref{sym*}, they are also odd with respect to the angular variable:
\begin{align}
g (e^{i \beta},e^{i \theta})=- g(e^{i \beta},e^{i (\theta+\pi)})\label{sym**}.
\end{align}Note that $Xf\not\in L^1_{sym,odd}(\Gam\times\sph)$, since the symmetry relation  \eqref{X_ray_sym**} is broken.

Our main result gives necessary and sufficient conditions for a function $g\in L^1_{sym,odd}(\Gam\times\sph)$ to satisfy
\begin{equation}
g (e^{i \beta},e^{i \theta})= Xf(e^{i \beta},e^{i \theta}), \text{ a.e. }(e^{i \beta},e^{i \theta})\in\Gam_+,
\end{equation}for some real valued $f\in L^1(\OM)$. Note that \eqref{sym**} implies $g= - Xf$ on $\Gam_-$. 

The characterization is in terms of the Fourier coefficients
\begin{align}\label{eq:gnk}
g_{n,k} := \frac{1}{(2 \pi)^2} \int_{-\pi}^{\pi} \int_{-\pi}^{\pi} g(e^{i \beta}, e^{i\theta}) e^{-i n \theta}  e^{-i k \beta} d \theta d \beta,\; n,k \in \BZ,
\end{align}of $g$ on the lattice $\BZ \times \BZ$. The order of the indexes play a role. Throughout, the first index is the Fourier mode in the angular variable on $\sph$, and we call it an \emph{angular mode}. The second index is the mode in the boundary variable on $\Gam$, and we call it a \emph{boundary mode}. 

In the statements below we use the notations $C^\mu(\OM)$, $0<\mu<1$, for the space of locally H\"older continuous functions,  and  $\jpn=(1+|n|^2)^{1/2}$.


\begin{figure}[hb!]\label{lattice}
	\centering
	\begin{tikzpicture}[scale=0.7,cap=round,>=latex]
	\tikzset{help lines/.style={color=blue!50}}
	\draw[thick,step=1cm,help lines] (0,0) grid (9,9);
	\draw[ultra thin,step=1cm,help lines] (0,0) grid (9,9);
	
	\draw[dashed,step=1cm,help lines] (9,3) to (3,9);
	\draw[thin] (9,2.5) to (0,7);
	
	\draw[ultra thick,-latex] (9.2,3) -- (-0.7,3) node[anchor=east] {$-n$};
	\draw[ultra thick,-latex] (9,0) -- (9,10) node[anchor=east] {$k$};
	\foreach \x in {0,1,...,9} {     
		\draw [thick] (\x,0) -- (\x,-0.2);
	}
	\foreach \y in {0,1,...,9} {   
		\draw [thick] (9,\y) -- (9.1,\y);
	}
	\foreach \x in {0,1,...,4}
	{
		\pgfmathtruncatemacro{\rx}{-9+2*\x};
		\node [anchor=north] at (2*\x,2.95)  {\rx};
		\draw [ultra thick] (2*\x,3.1) -- (2*\x,2.9);
	} 
	
	\foreach \y in {0,1,...,9}
	{
		\pgfmathtruncatemacro{\rr}{ -3 + \y}; 
		\node [anchor=east] at (9.8,\y)  {\rr};
	} 
	
	\coordinate[label=below:$n+2k \leq -1$] (origin) at (2.9,4.9);

	\draw[black] (8,4) circle (5pt);  
	\draw[black] (6,6) circle (5pt);  
	\draw[black] (4,8) circle (5pt);  
	
	\node at (8,5)[draw,rectangle,minimum height=0.1cm]{}; 
	\node at (6,5)[draw,rectangle,minimum height=0.1cm]{}; 

	\node[isosceles triangle,
	isosceles triangle apex angle=60,
	draw,
	rotate=270,  
	minimum size =0.01cm] (T1)at (8,6){};

	\node[isosceles triangle,
	isosceles triangle apex angle=60,
	draw,
	rotate=270,  
	minimum size =0.01cm] (T2)at (4,6){};
	
	\node at (8,7)[draw,trapezium,minimum height=0.1cm]{}; 
	\node at (2,7)[draw,trapezium,minimum height=0.1cm]{}; 
	
	\draw[black] (6,7) ellipse (3pt and 6pt);
	\draw[black] (4,7) ellipse (3pt and 6pt);
	
	
	\node [rectangle, draw, xslant=0.4] at (8,8) {};
	\node [rectangle, draw, xslant=0.4] at (0,8) {};
	
	\node[regular polygon,
	draw,
	regular polygon sides = 5] (p) at (6,8) {};
	
	\node[regular polygon,
	draw,
	regular polygon sides = 5] (p) at (2,8) {};

	\foreach \x in {.5,1.5,...,8.5} {
	}
	\foreach \y in {.5,1.5,...,8.5} {
	}
	\end{tikzpicture}
	\caption{$f$ is determined by the odd negative angular modes on or above the diagonal $k=-n$. The diagonal modes $g_{n,-n}$ are real valued. All the odd non-positive angular modes on and below the line $n+2k=-1$ vanish.}
\end{figure}
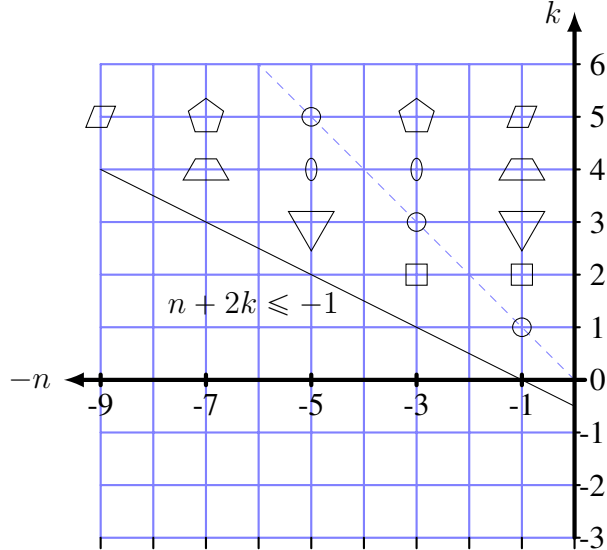


\begin{theorem}[Range characterization]\label{RangeCharac}

(i)  Let $ f \in L^1(\OM)$ be real valued satisfying \eqref{eq:f_refularity_cond1}, and $g\in L^1_{sym,odd}(\Gam\times\sph)$, with $$g= Xf \text{ on }\Gam_+.$$
Then  the Fourier coefficients $\{g_{n,k}\}_{n, k \in \BZ}$, of $g$ satisfy the following conditions.
\begin{alignat}{3}
&\text{Oddness}: && \quad g_{n,k}=0, && \quad \text{for all } n\in \BZ \text{ even, and all } k\in\BZ; \label{RTCond_odd}\\
&\text{Conjugacy}: && \quad g_{-n,-k}= \ol{g_{n,k}},  &&\quad \text{for all} \;  n, k \in \BZ;\label{RTCond_reality}\\
&\text{Symmetry}:
&& \quad g_{n,k}=(-1)^{n+k}g_{n+2k,-k}, &&\quad \text{for all} \; n,k\in\BZ; \label{RT_FourierEven}\\
&\text{Moments}: &&\quad  g_{n,k}=  (-1)^k  g_{n+2k,-k}, &&\quad  \text{for all } n\leq -1 \text{ odd,  and all } k\leq 0.\label{RTCond}
\end{alignat}

	(ii) Let $\{g_{n,k}\}$ be given for all $n \leq -1$ odd, and $k \in\BZ$ such that 
\begin{align}\label{gnk_decay}
	\sum_{\substack{n\leq -1\\
			n =  \,\text{odd}}} \jpn^{2} \sum_{k=-\INF }^\INF \lvert g_{n,k} \rvert < \INF, \quad \text{and} \quad
	\sum_{k=-\INF}^\INF  \jpk^{1+\mu}
	\sum_{\substack{n\leq -1\\
			n  =\,\text{odd}}} \lvert g_{n,k} \rvert < \INF, 
\end{align}for some $\mu>1/2$.

If $\{g_{n,k}\}$ satisfy \eqref{RT_FourierEven} and \eqref{RTCond}, then there exists a real valued $f \in L^1(\OM) \cap C^\mu(\OM)$ such that the mapping 
\begin{align}\label{eq:gnk_map}
	(\Gamma\times\sph) \ni (e^{i \beta}, e^{i\theta}) \longmapsto  
	2\re\left\{\sum_{\substack{n\leq-1\\
			n  = \,\text{odd}}}  \sum_{k \in\BZ} g_{n,k} e^{i n \theta}  e^{i k \beta}\right\}
\end{align} 
defines a function in $L^1_{sym,odd}(\Gam\times\sph)$, which coincides with $Xf$ on $\Gam_+$ (and with $- Xf$ on $\Gam_-$).
\end{theorem}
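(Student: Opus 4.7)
The plan is to translate the problem, via double Fourier expansion on the torus $\Gam\times\sph$, into a question about $A$-analytic sequences in the sense of \cite{bukhgeimBook}, whose range characterization through the Bukhgeim-Hilbert transform was worked out in \cite{sadiqTamasan01}. The bridge between the two languages is provided by the new mapping property of Theorem \ref{newmapping_Hilbert}, which is the main analytical input; once it is in hand, the remainder is a careful bookkeeping of indices, symmetries, and convergence.

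For part (i) I would dispose of \eqref{RTCond_odd}, \eqref{RTCond_reality} and \eqref{RT_FourierEven} first by direct manipulation of \eqref{eq:gnk}. The vanishing of even angular modes follows at once from the oddness relation \eqref{sym**}. Conjugacy is a consequence of $g$ being real valued (as $Xf$ is). For the symmetry identity one substitutes $(\beta,\theta) \mapsto (2\theta - \beta - \pi, \theta+\pi)$ in the defining integral for $g_{n,k}$; the Jacobian has absolute value $1$ and \eqref{sym*} converts the integrand into that of $g_{n+2k,-k}$ up to the phase $(-1)^{n+k}$. The substantive identity is \eqref{RTCond}. Here I would pass to fan-beam variables $\alpha = \theta - \beta$, so that the angular Fourier coefficients of $Xf$ along $\Gam$ coincide, up to normalization, with the boundary traces of the $A$-analytic sequence associated with the beam transform of $f$; the Bukhgeim-Hilbert constraint characterizing the range of these boundary traces in \cite{sadiqTamasan01} then expands, after a second Fourier series in $\beta$ on $\Gam$, into the countable family of relations \eqref{RTCond}.

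For part (ii) I would reverse the above passage. Using the coefficients indexed by $n \leq -1$ odd and $k \in \BZ$ subject to \eqref{RT_FourierEven}, \eqref{RTCond} and the decay \eqref{gnk_decay}, I would first define a candidate boundary function through the series \eqref{eq:gnk_map}. Oddness and conjugacy are built into the formula, the symmetry \eqref{RT_FourierEven} forces the sum to lie in $L^1_{sym,odd}(\Gam\times\sph)$, and the decay \eqref{gnk_decay} provides both absolute convergence and the H\"older control required by \cite{sadiqTamasan01}. The key step is to verify that the resulting boundary sequence lies in the range of the Bukhgeim-Hilbert transform on $\Gam$; this is precisely where \eqref{RTCond} enters, and Theorem \ref{newmapping_Hilbert} supplies the mapping property that turns the discrete algebraic identity into the pointwise compatibility required. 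Once this is done, \cite{sadiqTamasan01} produces a real valued $f \in L^1(\OM)\cap C^\mu(\OM)$ whose beam transform reproduces the given boundary data, so that $Xf = g$ on $\Gam_+$ and, by \eqref{sym**}, $Xf = -g$ on $\Gam_-$.

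The step I expect to be hardest is exactly this passage between the discrete algebraic identities \eqref{RT_FourierEven}--\eqref{RTCond} and the continuous Bukhgeim-Hilbert relation on the boundary, performed simultaneously in the two different parameterizations of lines (boundary vs.\ fan-beam) and under convergence constraints dictated by \eqref{gnk_decay}. The decay hypothesis is calibrated precisely so that the pivotal estimates underlying Theorem \ref{newmapping_Hilbert} apply, and it is the combination of this new mapping property with the existing $A$-analytic machinery of \cite{sadiqTamasan01} that drives both directions of the characterization.
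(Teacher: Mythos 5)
Your proposal follows essentially the same route as the paper: the elementary constraints by direct manipulation of \eqref{eq:gnk}, and the moment conditions \eqref{RTCond} by passing through the transport equation to the boundary trace of the associated $L^2$-analytic sequence, applying the Bukhgeim--Hilbert constraint $(I+i\HT)\bg=\bzero$ from \cite{sadiqTamasan01}, and expanding it on the Fourier lattice via Theorem \ref{newmapping_Hilbert}; the sufficiency direction is likewise reversed exactly as in the paper, with \eqref{gnk_decay} guaranteeing membership in $Y_\mu(\Gam)$ and the Bukhgeim--Cauchy integral producing the interior extension from which $f=\re(\del u_{-1})$ is read off. The only detail you leave implicit is the regularization step in part (i) (prove \eqref{RTCond} first for $f\in C^2_0(\OM)$, then pass to $f\in L^1(\OM)$ by density), which the paper needs in order to place the boundary trace in $l^{1,1}_{\INF}(\Gam)\cap C^\mu(\Gam;l_1)$ before invoking the necessity part of Theorem \ref{NecSuf_BukhgeimHilbert_Thm}.
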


The thrust of this work are the constraints \eqref{RTCond} replacing the moment conditions 
(see Remark \ref {GGHL=RTCond}), and the sufficiency part in Theorem \ref{RangeCharac} for functions of finite smoothness. In particular, for $n$ odd, the right hand sides of \eqref{RTCond} and \eqref{RT_FourierEven} differ by a sign. As a direct  consequence, the following holds.

\begin{cor}\label{corollary}
(i) Let $g\in L^1_{sym,odd}(\Gam\times\sph)$ coincide with $Xf$ on $\Gam_+$, and $\{g_{n,k}\}$ be its Fourier coefficients.  Then, for all $n\leq -1$ odd,
\begin{equation}\label{RTCond2neg}
g_{n,k}=\left\{
\begin{array}{ll}
0,& \text{ if }
k\leq -\frac{n+1}{2},\\
(-1)^{k+1}\overline{g_{-n-2k,k}},& \text{ if }
k\geq \frac{-n+1}{2},
\end{array}
\right.
\end{equation}see Figure \ref{lattice}.

(ii) Let $\{g_{n,k}\}$ be given for $n \leq -1$ odd, and $\ds k \geq\frac{-n-1}{2} $ such that 
\begin{align}\label{gnk_decay2}
	\sum_{\substack{n\leq -1\\
			n =  \,\text{odd}}} \jpn^{2} \sum_{k=\frac{-n-1}{2}}^\INF \lvert g_{n,k} \rvert < \INF, \quad \text{and} \quad
	\sum_{k=1}^\INF \jpk^{1+\mu}\sum_{\substack{n=-2k-1\\
			n  =\,\text{odd}}}^{-1}
			\lvert g_{n,k} \rvert < \INF, 
\end{align}for some $\mu>1/2$. If $\{g_{n,k}\}$ satisfies \eqref{RTCond2neg}, then there exists a real valued function $f \in L^1(\OM) \cap C^\mu(\OM)$, such that the mapping 
\begin{align}\label{eq:gnk_map}
	(\Gamma\times\sph) \ni (e^{i \beta}, e^{i\theta}) \longmapsto  2\re\left\{\sum_{\substack{n\leq-1\\
			n  = \,\text{odd}}}  \sum_{k=\frac{-n-1}{2}}^\INF g_{n,k} e^{i n \theta}  e^{i k \beta}\right\}
\end{align} 
is precisely $Xf$ on $\Gam_+$, and $-Xf$ on $\Gam_-$.
\end{cor}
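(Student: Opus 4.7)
\emph{Plan.} For part (i), I would deduce the structure \eqref{RTCond2neg} directly from the three conditions \eqref{RTCond_reality}, \eqref{RT_FourierEven}, and \eqref{RTCond} of Theorem \ref{RangeCharac}(i). The driving observation is that for $n$ odd the factor $(-1)^{n+k}$ appearing in the symmetry \eqref{RT_FourierEven} equals $-(-1)^k$, which disagrees in sign with the factor $(-1)^k$ of the moment condition \eqref{RTCond}. Since $n+2k$ is odd whenever $n$ is, it never vanishes, and I would split on its sign. In the region $n+2k\leq -1$ (i.e.\ $k\leq -(n+1)/2$), I would apply \eqref{RTCond} either directly to $g_{n,k}$ when $k\leq 0$, or to $g_{n+2k,-k}$ when $k\geq 1$ (noting that its second index $-k$ is then nonpositive, so \eqref{RTCond} is applicable there), combine with \eqref{RT_FourierEven}, and use the sign mismatch to force $g_{n,k}=0$. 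In the complementary region $n+2k\geq 1$ (i.e.\ $k\geq (-n+1)/2$), I would use \eqref{RT_FourierEven} to express $g_{n,k}$ in terms of $g_{n+2k,-k}$ (now with first index positive) and invoke conjugacy \eqref{RTCond_reality} to rewrite $g_{n+2k,-k}=\overline{g_{-n-2k,k}}$, producing the stated identity.

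For part (ii), the plan is to reduce to Theorem \ref{RangeCharac}(ii) by a zero extension. I would set $\tilde g_{n,k}:=g_{n,k}$ for $k\geq (-n-1)/2$ and $\tilde g_{n,k}:=0$ for $k<(-n-1)/2$; consistency at the overlap $k=-(n+1)/2$ is automatic because \eqref{RTCond2neg} already forces $g_{n,k}=0$ there. The decay bound \eqref{gnk_decay2} on $g$ transcribes directly into the decay bound \eqref{gnk_decay} on $\tilde g$, since the only nonzero modes contributing to the $\tilde g$-sums are precisely those already weighted in \eqref{gnk_decay2}.

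Next I would verify that $\tilde g$ satisfies \eqref{RT_FourierEven} and \eqref{RTCond}. Both identities link $\tilde g_{n,k}$ to $\tilde g_{n+2k,-k}$, and they become meaningful constraints only when $n+2k\leq -1$ is odd, i.e.\ precisely in the zero region $k\leq -(n+1)/2$. A short calculation shows that in this regime the second argument $-k$ also lies in the zero region of the $(n+2k)$-row (with boundary equality exactly when $n=-1$, strict inequality otherwise), so both sides of each identity vanish trivially. Applying Theorem \ref{RangeCharac}(ii) to $\tilde g$ then yields a real-valued $f\in L^1(\OM)\cap C^\mu(\OM)$ whose $X$-ray transform on $\Gam_+$ is reproduced by the full series \eqref{eq:gnk_map} in $\tilde g$. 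Because $\tilde g_{n,k}=0$ for $k<(-n-1)/2$, this series collapses to the truncated series \eqref{eq:gnk_map} of the corollary.

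The main obstacle is less an analytic difficulty than careful bookkeeping at the boundary $k=-(n+1)/2$ (in particular when $n=-1$), where the zero region and the given region meet; the fact that \eqref{RTCond2neg} already enforces vanishing on this boundary is exactly what makes the zero extension consistent and closes the argument without requiring any new estimate beyond Theorem \ref{RangeCharac}.
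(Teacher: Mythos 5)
Your argument is correct and follows essentially the route the paper intends: the paper gives no separate proof of the corollary beyond the remark that for odd $n$ the right-hand sides of \eqref{RTCond} and \eqref{RT_FourierEven} differ by a sign, and your case split on the sign of $n+2k$ (applying \eqref{RTCond} to $g_{n+2k,-k}$ when $1\le k\le -(n+1)/2$, and conjugacy \eqref{RTCond_reality} when $n+2k\ge 1$) is exactly the computation that remark compresses, as is the zero-extension reduction of part (ii) to Theorem \ref{RangeCharac}(ii). The only point worth tightening is your claim that the identities are ``meaningful constraints only when $n+2k\le -1$'': the mixed instances of \eqref{RT_FourierEven} with $n\le -1$ but $n+2k\ge 1$ (where the partner mode is supplied by conjugation) are in fact used in the proof of Theorem \ref{RangeCharac}(ii), but for your extension they reduce precisely to the second line of \eqref{RTCond2neg} and so hold by hypothesis rather than trivially.
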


The oddness and conjugacy constraints in \eqref{RTCond_odd} and \eqref{RTCond_reality} are not intrinsic to the $X$-ray transform. The symmetry constraints \eqref{RT_FourierEven} merely account for each line being doubly parametrized in $\Gam_+$, and they are shared by any function on the torus satisfying the symmetry \eqref{sym*}; see Lemma \ref {lem:evenness} in the appendix. 


For a complex valued $f$, since $\ds \re (Xf ) = X(\re(f))$ and $\ds \im (Xf) = X(\im(f))$,  Theorem \ref{RangeCharac} and Corollary \ref{corollary} apply  to  $\re (f)$ and to $\im(f)$, respectively.

\section{$L^2$-analytic maps and their trace characterization} \label{sec:L2map}
The method of proof of \eqref{RTCond} is based on the characterization in \cite{sadiqTamasan01}
of traces of $A$-analytic maps in the sense of Bukhgeim \cite{bukhgeimBook}. In this section we summarize those existing results used in the proof of Theorem \ref{RangeCharac}. 
These results hold for $\OM$ a strictly convex domain, not necessarily the unit disk.

We approach the range characterization via the well-known connection with the transport model, where the unique solution $u(z,\btheta)$ to the boundary value problem
\begin{subequations}\label{bvp_transport}
\begin{align}\label{TransportEq1}
\btheta\cdot\nabla u(z,\btheta) &=  2f(z) , \quad (z,\btheta)\in \OM\times\sph, \\  \label{u_Gam-}
u \lvert_{\Gam_{-}} &= - Xf\lvert_{\Gam_-}
\end{align}
\end{subequations}has the trace $u\lvert_{\Gam\times\sph}$ in $L^1_{sym,odd}(\Gam\times\sph)$, and $u|_{\Gam_+}= Xf|_{\Gam_+}$ on $\Gam_+$.

The advection operator $\btheta \cdot\nabla$ in complex notation becomes $e^{-i\tta}\dba + e^{i\tta}\del$, 
where $\btheta=(\cos\tta,\sin\tta)$, and  $\dba = (\del_{x_1}+i\del_{x_2})/2$ and $\del =(\del_{x_1}-i\del_{x_2})/2$ are the Cauchy-Riemann operators.


If $\ds \sum_{n \in \BZ} u_{n}(z) e^{in\tta}$ is the Fourier series  expansion in the angular variable $\btheta$ of a solution $u$ of \eqref{TransportEq1}, then, provided some sufficient decay (to be specified later) of $u_n$ to allow regrouping, 
\begin{equation} \label{eq:transport_fourier}
 \begin{aligned}
  \btheta \cdot\nabla u(z,\btheta) 
 &= \dba u_{1}(z) + \del u_{-1}(z) + \underset{{\substack{n \in \BZ \\ n\neq 1 } }}{\sum}  \left( \dba u_{n}(z)+ \del u_{n-2}(z)  \right) e^{i(n-1)\tta}.
 \end{aligned}
\end{equation}
By identifying the Fourier modes of the same order, the equation \eqref{TransportEq1} reduces to the system:
\begin{align}\label{freeeq}
\overline{\del} u_{1}(z)+\del u_{-1}(z) =2f(z), 
\end{align}
and 
\begin{align}\label{infsys0}
\dba u_n(z) +\del u_{n-2}(z) =0,\quad n\neq 1.
\end{align}
For a real valued $u(z,\btheta)$, $u_{-n}=\ol{u_n}$ and the angular dependence is completely determined by the sequence $\bu$ of its nonpositive Fourier modes,
\begin{align}\label{boldu}
\OM \ni z\mapsto  \bu(z)&: = \langle u_{0}(z), u_{-1}(z),u_{-2}(z),... \rangle.
\end{align}
In particular, $\bu$ solves the Beltrami-like equation
\begin{align}\label{beltrami}
\dba\bu(z) +L^2 \del\bu(z) = 0,\quad z\in \OM,
\end{align}
where  $L\bu(z)=L (u_0(z),u_{-1}(z),u_{-2}(z),...):=(u_{-1}(z),u_{-2}(z),...)$ denotes the left translation.


Bukhgeim's original  theory in \cite{bukhgeimBook}  shows that solutions of \eqref{beltrami}, called $L^2$-analytic, satisfy a Cauchy-like integral formula,
 \begin{align}\label{Analytic}
\bu (z) = \B [\bu \lvert_{\Gam}](z), \quad  z\in\OM,
\end{align} where $\B$ is 
 the Bukhgeim-Cauchy operator  acting on $\bu \lvert_{\Gam}$. We use the formula in \cite{finch}, where
  $\B$ is defined component-wise for $n\geq 0$ by
\begin{align} \label{BukhgeimCauchyFormula}
 (\B \bu)_{-n}(z) &:= \frac{1}{2\pi i} \int_{\Gam}
\frac{ u_{-n}(\zeta)}{\zeta-z}d\zeta  + \frac{1}{2\pi i}\int_{\Gam} \left \{ \frac{d\zeta}{\zeta-z}-\frac{d \ol{\zeta}}{\ol{\zeta}-\ol{z}} \right \} \sum_{j=1}^{\infty}  
 u_{-n-2j}(\zeta)
\left( \frac{\ol{\zeta}-\ol{z}}{\zeta-z} \right) ^{j},\; z\in\OM.
\end{align}


Similar to the analytic maps, the traces of $L^2$-analytic maps  on the boundary must satisfy some constraints, which can be expressed in terms of a corresponding Hilbert-like transform introduced in  \cite{sadiqTamasan01}. More precisely, the Bukhgeim-Hilbert transform $\HT$ acting on  $\bg$, 
\begin{align}\label{boldHg}
\Gam \ni z\mapsto  (\HT \bg)(z)& = \langle (\HT \bg)_{0}(z), (\HT \bg)_{-1}(z),(\HT \bg)_{-2}(z),... \rangle
\end{align}
is defined component-wise for $n\geq 0$ by
\begin{align}\label{BHtransform}
(\HT \bg)_{-n}(z)=\frac{1}{\pi }\int_\Gam \frac{ g_{-n}(\zeta)}{\zeta-z}d\zeta  + \frac{1}{\pi }\int_{\Gam} \left \{ \frac{d\zeta}{\zeta-z}-\frac{d \ol{\zeta}}{\ol{\zeta-z}} \right \} \sum_{j=1}^{\infty}  
g_{-n-2j}(\zeta)
\left( \frac{\ol{\zeta-z}}{\zeta-z} \right) ^{j}, z\in\Gam.
\end{align}

The theorems below comprise some results in  \cite{sadiqTamasan01,sadiqTamasan02}. For $0<\mu<1$, $p=1,2$, we consider the  Banach spaces:
\begin{equation}\label{spaces}
 \begin{aligned} 
 l^{1,p}_{\INF}(\Gam) &:= \left \{ \bg= \langle g_{0}, g_{-1}, g_{-2},...\rangle\; : \lnorm{\bg}_{l^{1,p}_{\INF}(\Gam)}:= \sup_{\xi \in \Gam}\sum_{j=0}^{\INF}  \jpj^p \lvert g_{-j}(\xi) \rvert < \INF \right \},\\
 C^{\mu}(\Gam; l_1) &:= \left \{ \bg= \langle g_{0}, g_{-1}, g_{-2},...\rangle:
\sup_{\xi\in \Gam} \lVert \bg(\xi)\rVert_{\ds l_{1}} + \underset{{\substack{
            \xi,\eta \in \Gam \\
            \xi\neq \eta } }}{\sup}
 \frac{\lVert \bg(\xi) - \bg(\eta)\rVert_{\ds l_{1}}}{|\xi - \eta|^{ \mu}} < \INF \right \}, \\
  Y_{\mu}(\Gam) &:= \left \{ \bg: \bg \in  l^{1,2}_{\INF}(\Gam) \; \text{and} \;
           \underset{{\substack{
             \xi,\eta \in \Gam \\
             \xi\neq \eta } }}{\sup} \sum_{j=0}^{\INF}  \jpj 
  \frac{\lvert g_{-j}(\xi) - g_{-j}(\eta)\rvert }{|\xi - \eta|^{ \mu}} < \INF \right \},
 \end{aligned}
 \end{equation} where, for brevity, we use the notation $\jpj=(1+|j|^2)^{1/2}$.
Similarly,  we consider $ C^{\mu}(\ol \OM; l_\INF) $, and $  C^{\mu}(\OM; l_\INF) = \bigcup_{0<r<1} C^{\mu}(\ol \OM_r; l_\INF)$, where for $0<r<1$, $\OM_r = \{ z \in \BC : |z| <r  \}$.

\begin{theorem}\label{BukhgeimCauchyThm}
Let $0<\mu<1$. Let $\bg = \langle g_{0}, g_{-1}, g_{-2},...\rangle$ be a sequence valued map defined on the boundary $\Gam$ and $\B$ be the Bukhgeim-Cauchy operator acting on $\bg$ as in \eqref{BukhgeimCauchyFormula}. 

(i) If $\bg \in l^{1,1}_{\INF}(\Gam)\cap C^\mu(\Gam;l_1)$, then $\bu := \B \bg\in C^{1,\mu}(\OM;l_\infty)\cap C(\ol \OM;l_\infty)$ is $L^2$-analytic in $\OM$.

(ii) Moreover, if $\bg\in Y_{\mu}(\Gam)$ for $\mu>1/2$, then $ \B \bg \in C^{1,\mu}(\OM;l_1)\cap C^{\mu}(\ol \OM;l_1)$. 
\end{theorem}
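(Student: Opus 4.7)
The plan is to verify both statements componentwise, relying on standard Bukhgeim-Cauchy theory. I would first observe that for any $z \in \ol\OM$ and $\zeta \in \Gam$ one has the identity $\ol\zeta - \ol z = \overline{\zeta - z}$, so the kernel factor $(\ol\zeta - \ol z)/(\zeta - z)$ has modulus identically equal to one. Consequently, absolute convergence of the series $\sum_{j\geq1} g_{-n-2j}(\zeta)\bigl((\ol\zeta-\ol z)/(\zeta-z)\bigr)^j$ is controlled solely by the decay of the coefficient sequence, which is supplied by the hypothesis $\bg \in l^{1,1}_{\INF}(\Gam)$. This justifies pointwise definition of $(\B\bg)_{-n}(z)$ for all $z \in \OM$ and $n \geq 0$.

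For part (i), the $L^2$-analyticity is verified by applying $\dba$ and $\del$ termwise (interchange legitimate by uniform convergence on compacts in $\OM$). The plain Cauchy piece $C_n(z) = \frac{1}{2\pi i}\int_\Gam g_{-n}(\zeta)(\zeta-z)^{-1}d\zeta$ satisfies $\dba C_n = 0$ in $\OM$. The remaining computation uses that
\[
\dba_z\!\left[\left(\frac{d\zeta}{\zeta-z} - \frac{d\ol\zeta}{\ol\zeta-\ol z}\right)\left(\frac{\ol\zeta-\ol z}{\zeta-z}\right)^j\right]
\]
matches, after a telescoping shift $j \mapsto j+1$, the $\del_z$-derivative of the corresponding kernel inside $(\B\bg)_{-n-2}$. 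Identifying Fourier modes gives $\dba u_{-n} + \del u_{-n-2} = 0$ for every $n \geq 0$, which is \eqref{beltrami}. Continuity up to $\Gam$ and the $C^{1,\mu}(\OM;l_\infty)$ regularity follow by applying Plemelj-Sokhotski and Privalov's theorem to each component, using $\bg \in C^\mu(\Gam;l_1)$ for the modulus of continuity uniformly in $n$.

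For part (ii), the improvement from $l_\infty$- to $l_1$-valued regularity is the delicate step. Under the $Y_\mu(\Gam)$ hypothesis, the $\jpj$-weighted control both on values and on Hölder quotients of the components $g_{-j}$ permits summation of the Privalov-type Hölder estimates applied componentwise to $(\B\bg)_{-n}$ and uniformly in $n$: the contribution of the $j$th tail term in \eqref{BukhgeimCauchyFormula} carries a Hölder seminorm bound of order $\jpj$ (coming from differentiating the kernel $j$ times in a discrete sense), which is exactly compensated by the $\jpj$-weights in the definition of $Y_\mu$. The restriction $\mu > 1/2$ enters as the standard threshold ensuring that a Cauchy-type integral of a $C^\mu$ function, combined with a logarithmic factor from the tail summation, still lies in $C^\mu(\ol\OM)$ and that the resulting series is absolutely summable. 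The central technical obstacle is precisely this uniform-in-$j$ Hölder-up-to-the-boundary estimate for the operators $\bg \mapsto \int_\Gam \bigl\{\frac{d\zeta}{\zeta-z} - \frac{d\ol\zeta}{\ol\zeta-\ol z}\bigr\} g(\zeta)\bigl((\ol\zeta-\ol z)/(\zeta-z)\bigr)^j$; once achieved, summing against the $Y_\mu$-weights yields the claimed $C^\mu(\ol\OM;l_1)$ regularity, with the interior $C^{1,\mu}$-estimate following from classical Schauder/Privalov bounds applied termwise.
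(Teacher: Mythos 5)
The paper does not actually prove Theorem \ref{BukhgeimCauchyThm}: part (i) is quoted from \cite[Theorem 3.1]{sadiqTamasan01} and part (ii) from \cite[Proposition 2.3]{sadiqTamasan02}, so there is no in-paper argument to measure your attempt against. That said, your outline does reproduce the strategy of those references: unimodularity of $(\ol{\zeta}-\ol{z})/(\zeta-z)$ reduces convergence of the series in \eqref{BukhgeimCauchyFormula} to summability of the coefficients; termwise differentiation with the telescoping cancellation between $\dba$ applied to the $j$-th kernel of $(\B\bg)_{-n}$ and $\del$ applied to the $(j-1)$-th kernel of $(\B\bg)_{-n-2}$ (both carrying the same coefficient $g_{-n-2j}$) yields \eqref{beltrami}; and Plemelj--Sokhotski/Privalov estimates summed against the $\jpj$-weights give the boundary regularity. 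The $L^2$-analyticity computation you describe is correct and checkable as written.

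As a standalone proof, however, the proposal has a genuine gap, and you essentially name it yourself: the uniform-in-$j$ H\"older-up-to-the-boundary estimate for the operators $\ds g\mapsto\int_\Gam\bigl\{\tfrac{d\zeta}{\zeta-z}-\tfrac{d\ol{\zeta}}{\ol{\zeta}-\ol{z}}\bigr\}\,g(\zeta)\bigl((\ol{\zeta}-\ol{z})/(\zeta-z)\bigr)^j$ is asserted (a seminorm bound of order $\jpj$, attributed to differentiating the kernel $j$ times in a discrete sense) but never derived, and this estimate is the entire technical content of part (ii): without it neither the claimed linear growth in $j$ nor the upgrade from $l_\infty$- to $l_1$-valued regularity is justified. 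Relatedly, your account of the threshold $\mu>1/2$ (a logarithmic factor from the tail summation) is a plausible guess rather than a derivation; in \cite{sadiqTamasan02} the restriction emerges from a quantitative balance between the $j$-dependence of the H\"older constants of the $j$-th kernel and the weights defining $Y_\mu(\Gam)$, and it has to be tracked explicitly. In short: correct skeleton, consistent with the cited sources, but the decisive estimate is missing, so the argument is an outline rather than a proof.
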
For the proof of Theorem \ref {BukhgeimCauchyThm} (i) we refer to \cite[Theorem 3.1]{sadiqTamasan01}, and for part (ii) we refer to \cite[Proposition 2.3]{sadiqTamasan02}.


Key to the proof of the constraints \eqref{RTCond} is the following characterization of traces of $L^2$-analytic maps.
\begin{theorem}\label{NecSuf_BukhgeimHilbert_Thm}

Let $0<\mu<1$, and let $\HT$ be the Bukhgeim-Hilbert transform in \eqref{BHtransform}.
%
%

(i) If $\bg \in l^{1,1}_{\INF}(\Gam)\cap C^\mu(\Gam;l_1)$ is the boundary value of an $L^2$-analytic function,
then $\HT \bg\in C^{\mu}(\Gam;l_\infty)$ and satisfies 
\begin{align} \label{NecSufEq}
(I+ i \HT) \bg = {\bf {0}}.
\end{align}
(ii)  If $\bg\in Y_{\mu}(\Gam)$ for $\mu>1/2$,  satisfies \eqref{NecSufEq}, then there exists an $L^2$-analytic function
$ \bu \in C^{1,\mu}(\OM;l_1)\cap C^{\mu}(\ol \OM;l_1)$, such that
\begin{align}\label{gdata_defn}
  \bu \lvert_{\Gam} = \bg.
\end{align}
%
\end{theorem}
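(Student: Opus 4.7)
The plan is to prove both directions of this characterization via a Plemelj-Sokhotski type jump formula for the Bukhgeim-Cauchy operator $\B$. First I would establish that, for $\bg$ with the regularity assumed in part (i), the non-tangential boundary limit of $\B\bg$ from inside $\OM$ satisfies, componentwise,
\begin{align*}
(\B\bg)_{-n}(z) \longrightarrow \tfrac{1}{2} g_{-n}(z_0) + \tfrac{1}{2i}(\HT\bg)_{-n}(z_0), \qquad z \to z_0 \in \Gam.
\end{align*}
The leading term in \eqref{BukhgeimCauchyFormula} is a scalar Cauchy transform, whose classical jump contributes $\tfrac{1}{2}g_{-n}(z_0)$ plus a principal-value integral. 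The correction $\sum_{j\geq 1} g_{-n-2j}(\zeta)\,[(\ol\zeta-\ol z)/(\zeta-z)]^j$ paired with $d\zeta/(\zeta-z) - d\ol\zeta/(\ol\zeta-\ol z)$ must be shown to converge uniformly up to $\Gam$ and to produce the matching tail in $\HT$. The normalization mismatch between $\frac{1}{2\pi i}$ in $\B$ and $\frac{1}{\pi}$ in $\HT$ accounts precisely for the factor $\tfrac{1}{2i}$.

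With this jump formula in hand, part (i) is essentially immediate: by Theorem \ref{BukhgeimCauchyThm}(i), any $L^2$-analytic $\bu$ with trace $\bg$ equals $\B\bg$ inside $\OM$, and matching boundary values yields
\begin{align*}
g_{-n}(z_0) = \tfrac{1}{2} g_{-n}(z_0) + \tfrac{1}{2i}(\HT\bg)_{-n}(z_0),
\end{align*}
which rearranges to $(I + i\HT)\bg = \mathbf{0}$ componentwise. The regularity $\HT\bg \in C^{\mu}(\Gam; l_\infty)$ then follows from the jump formula, using the already established interior regularity of $\B\bg$.

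For part (ii), I would define $\bu := \B\bg$. Theorem \ref{BukhgeimCauchyThm}(ii), whose hypothesis $\bg \in Y_\mu(\Gam)$ with $\mu > 1/2$ is tailored exactly to yield $l_1$-valued boundary regularity, gives $\bu \in C^{1,\mu}(\OM; l_1) \cap C^\mu(\ol\OM; l_1)$ and $L^2$-analytic in $\OM$. To identify the trace, I would apply the jump formula and then substitute the condition $(I + i\HT)\bg = \mathbf{0}$, rewritten as $\HT\bg = i\bg$, to obtain
\begin{align*}
\bu|_\Gam = \tfrac{1}{2}\bg + \tfrac{1}{2i}\,(i\bg) = \bg,
\end{align*}
which gives \eqref{gdata_defn}.

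The main obstacle is the rigorous justification of the jump formula for the infinite tail. On $\Gam$ one has $|(\ol\zeta-\ol z)/(\zeta-z)| \leq 1$, but this bound is not summable on its own; convergence hinges on the decay of the components of $\bg$ encoded in $l^{1,1}_\INF(\Gam)$ (respectively $Y_\mu(\Gam)$). Interchanging the limit $z \to z_0$ with the $j$-summation requires Privalov-type splitting: a small arc around $z_0$ handled by the H\"older continuity of $\bg$ in the $l_1$ norm, and its complement handled by the coefficient decay. This technical estimate is precisely what dictates the particular function spaces appearing in both parts of the statement.
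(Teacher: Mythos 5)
Your proposal is correct and follows essentially the same route as the actual proof, which this paper does not reproduce but defers to \cite{sadiqTamasan01}: there the result is likewise obtained from a Sokhotski--Plemelj jump formula for the Bukhgeim--Cauchy integral \eqref{BukhgeimCauchyFormula}, with the scalar Cauchy term producing the $\tfrac{1}{2}\bg$ jump, the tail (whose kernel $\frac{d\zeta}{\zeta-z}-\frac{d\ol\zeta}{\ol\zeta-\ol z}$ is nonsingular on $\Gam$) passing to the limit without a jump, and the identity $\bu=\B[\bu|_\Gam]$ from \eqref{Analytic} closing part (i) while $\bu:=\B\bg$ together with Theorem \ref{BukhgeimCauchyThm}(ii) closes part (ii). The only quibble is attribution: the interior representation $\bu=\B[\bu|_\Gam]$ comes from Bukhgeim's Cauchy-type formula \eqref{Analytic}, not from Theorem \ref{BukhgeimCauchyThm}(i).
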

For the proof  of Theorem \ref{NecSuf_BukhgeimHilbert_Thm}  we refer to \cite[Proposition 3.1, Theorem 3.2,  Corollary 4.1, and Proposition 4.2]{sadiqTamasan01}.



\section{Properties of the Bukhgeim-Hilbert transform on the Fourier lattice}\label{sec:newBHprop}


In this section, we present a new mapping property of the Bukhgeim-Hilbert transform. In here $\Omega$ is the unit disk, and $\Gam$ is its unit circle boundary.

Given $\bg = \langle g_0, g_{-1}, g_{-2}, ... \rangle \in l_{\INF}(\BN; L^1(\Gam))$, we consider the Fourier coefficients of its components 
\begin{align}\label{gnk_bg}
g_{-n,k} := \frac{1}{2 \pi} \int_{-\pi}^{ \pi} g_{-n} \left( e^{i  \beta} \right) e^{-i k \beta} d \beta, \text{ for all } n \geq 0, \text{ and } k \in \BZ.
\end{align}


\begin{theorem}\label{newmapping_Hilbert}
Let $\bg=  \langle g_{0},g_{-1}, g_{-2} ... \rangle \in l^{1,1}_{\INF}(\Gam)\cap C^\mu(\Gam;l_1)$, $0< \mu <1$, and $g_{-n,k}$ be the Fourier coefficients of its components as in \eqref{gnk_bg}.
Let $\ds \HT \bg = \langle (\HT \bg)_{0}, (\HT \bg)_{-1},(\HT \bg)_{-2},... \rangle$ be the Bukhgeim-Hilbert transform  acting on $\bg$ as defined in \eqref{BHtransform}.
Then  $\HT \bg\in C^{\mu}(\Gam;l_\infty)$, and the Fourier coefficients 
$\ds (\HT \bg)_{-n,k} := \frac{1}{2 \pi} \int_{0}^{2 \pi} (\HT \bg)_{-n} \left( e^{i  \beta} \right) e^{-i k \beta} d \beta$, for $ n \geq 0,  k \in \BZ, $
of its components satisfy 
\begin{align}\label{Hu_comp}
(-i) (\HT \bg)_{-n,k}=\left\{
\begin{array}{ll}g_{-n,k}&\mbox{ if } k\geq 0,\\
-g_{-n,k}+ 2(-1)^k g_{-n+2k,-k}&\mbox{ if } k\leq -1.
\end{array}
\right.\end{align}
\end{theorem}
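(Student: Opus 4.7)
\textbf{Plan for the proof of Theorem \ref{newmapping_Hilbert}.} The strategy is to exploit the algebraic simplification that occurs when $\Gamma$ is the unit circle, then expand each component of $\bg$ in its Fourier series and read off the coefficients from an orthogonality argument. The regularity statement $\HT\bg\in C^\mu(\Gamma;l_\infty)$ is a direct consequence of Theorem \ref{NecSuf_BukhgeimHilbert_Thm}(i), so the entire task reduces to the explicit identity \eqref{Hu_comp}.

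First I would simplify the two kernels in \eqref{BHtransform} on $\Gamma$. For $\zeta,z\in\Gamma$ one has $\ol\zeta=\zeta^{-1}$, $\ol z=z^{-1}$, and $d\ol\zeta=-d\zeta/\zeta^{2}$, so a short computation gives
\[
\frac{\ol{\zeta-z}}{\zeta-z}=-\frac{1}{\zeta z}, \qquad
\frac{d\zeta}{\zeta-z}-\frac{d\ol\zeta}{\ol{\zeta-z}}=\frac{d\zeta}{\zeta}.
\]
Substituting these into \eqref{BHtransform} yields the working form
\[
(\HT\bg)_{-n}(z)=\frac{1}{\pi}\,\mathrm{p.v.}\int_{\Gamma}\frac{g_{-n}(\zeta)}{\zeta-z}\,d\zeta\;+\;\frac{1}{\pi}\int_{\Gamma}\frac{d\zeta}{\zeta}\sum_{j=1}^{\infty}(-1)^{j}\,g_{-n-2j}(\zeta)\,\zeta^{-j}z^{-j}.
\]

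Next I would handle the two pieces separately using $g_{-m}(\zeta)=\sum_{\ell\in\BZ}g_{-m,\ell}\,\zeta^{\ell}$. For the classical Cauchy piece, the Plemelj–Sokhotski jump relation on $\Gamma$ gives
\[
\frac{1}{\pi}\,\mathrm{p.v.}\int_{\Gamma}\frac{\zeta^{k}}{\zeta-z}\,d\zeta=\begin{cases}\phantom{-}i\,z^{k},& k\geq 0,\\ -i\,z^{k},& k\leq -1,\end{cases}
\]
which follows from computing the interior and exterior Cauchy integrals of $\zeta^{k}$ and summing the two boundary limits. For the correction sum, the simplified kernel $d\zeta/\zeta$ acts as the Haar measure on $\Gamma$: using $\int_{\Gamma}\zeta^{m}\,d\zeta=2\pi i\,\delta_{m,-1}$, the integral $\tfrac{1}{\pi}\int_{\Gamma}\zeta^{\ell-j}\,d\zeta/\zeta$ equals $2i\,\delta_{\ell,j}$. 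Hence the double sum collapses to the single series $2i\sum_{j\geq 1}(-1)^{j}g_{-n-2j,j}\,z^{-j}$, which after the substitution $k=-j$ becomes $2i\sum_{k\leq -1}(-1)^{k}g_{-n+2k,-k}\,z^{k}$.

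Combining the two contributions, the Fourier expansion of $(\HT\bg)_{-n}$ in $z=e^{i\beta}$ reads
\[
(\HT\bg)_{-n}(z)=i\sum_{k\geq 0}g_{-n,k}\,z^{k}-i\sum_{k\leq -1}g_{-n,k}\,z^{k}+2i\sum_{k\leq -1}(-1)^{k}g_{-n+2k,-k}\,z^{k},
\]
and reading off the $k$-th coefficient and multiplying by $-i$ yields exactly \eqref{Hu_comp}. The only point that requires care is the interchange of the sum over $j$ with the boundary integral and of the sum over $\ell$ with the principal-value integral; the hypothesis $\bg\in l^{1,1}_{\infty}(\Gamma)\cap C^{\mu}(\Gamma;l_{1})$ is the natural place where this is controlled, because it bounds $\sum_{j}\jpj\|g_{-n-2j}\|_{\infty}$ uniformly while supplying enough Hölder regularity for pointwise representation of the Cauchy principal value. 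This justification of convergence, rather than the algebra itself, is the main (though mild) technical obstacle.
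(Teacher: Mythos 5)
Your proposal is correct and follows essentially the same route as the paper: both reduce the kernels of $\HT$ on the unit circle to elementary form, evaluate the singular Cauchy term via the Plemelj--Sokhotski jump relation (the paper's Lemma \ref{singularintegral}), and collapse the correction series by orthogonality of the characters $\zeta^m$ on $\Gam$, with the regularity claim delegated to Theorem \ref{NecSuf_BukhgeimHilbert_Thm}(i). The only cosmetic difference is that you keep the kernels in complex form ($d\zeta/\zeta$ and $-1/(\zeta z)$) where the paper passes immediately to the angular parametrization ($i\,d\alpha$ and $-e^{-i(\beta+\alpha)}$); the computations are identical.
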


\begin{proof} 

Since $\bg=  \langle g_{0},g_{-1}, g_{-2} ... \rangle\in l^{1,1}_{\INF}(\Gam)\cap C^\mu(\Gam;l_1)$, by Theorem \ref{NecSuf_BukhgeimHilbert_Thm} (i), $\HT \bg\in C^{\mu}(\Gam;l_\infty) $.
 
 For $\xi = e^{i \beta}$ and $\zeta = e^{i \alpha} $ on $\Gam$, 
 \begin{align*}
 \frac{d\zeta}{\zeta-\xi} = \frac{i e^{i\alpha}d \alpha}{e^{i\alpha}-e^{i\beta }}, \quad 
 \frac{d\zeta}{\zeta-\xi}-\frac{d \ol{\zeta}}{\ol{\zeta}-\ol{\xi}} = id \alpha, \quad 
 \left( \frac{\ol{\zeta}-\ol{\xi}}{\zeta-\xi} \right) = - e^{-i (\beta+\alpha)},
 \end{align*}
 and the components $\ds  (\HT \bg)_{n}$, $n \geq 0$,  rewrite as 
 \begin{align}\label{eq:HT_fanbeam}
 (-i)\ds(\HT\bg)_{-n}(e^{i\beta})= 
 \frac{1}{ \pi} \int_{-\pi}^{ \pi}   \frac{g_{-n}(e^{i\alpha})}{e^{i\alpha}-e^{i\beta }} e^{i\alpha} d \alpha +
  \frac{1}{\pi} \sum_{k=1}^{\infty} (-1)^k e^{-i \beta k } \int_{-\pi}^{ \pi}  g_{-n-2k}(e^{i\alpha}) e^{-i \alpha k } d \alpha.
 \end{align}

Since $\ds g_{-n}(e^{i\alpha})=\sum_{k=-\infty}^\infty g_{-n,k}e^{ik\alpha},$ by using Lemma \ref{singularintegral} (in the appendix),  the first term on the right-hand-side of \eqref{eq:HT_fanbeam} becomes
\begin{align*}
 \frac{1}{\pi} \int_{-\pi}^{ \pi}   \frac{g_{-n}(e^{i\alpha})}{e^{i\alpha}-e^{i\beta }} e^{i\alpha} d \alpha &= \sum_{k=-\infty}^{\infty} g_{-n,k} \frac{1}{\pi} \int_{-\pi}^{\pi} \frac{e^{i(k+1)\alpha}}{e^{i\alpha}-e^{i\beta}} d \alpha = \sum_{k=-\infty}^{\infty} \sgn{(k)} g_{-n,k} e^{i k \beta}.
\end{align*} 
The last term of \eqref{eq:HT_fanbeam},
\begin{align*}
\frac{1}{\pi} \sum_{k=1}^{\infty} (-1)^k e^{-i \beta k } \int_{-\pi}^{ \pi}  g_{-n-2k}(e^{i\alpha}) e^{-i \alpha k } d \alpha 
&= \sum_{k=1}^{\infty}2 (-1)^k g_{-n-2k,k}e^{-i \beta k }.
\end{align*}
Thus, the right-hand-side of \eqref{eq:HT_fanbeam} is
\begin{align*}
\sum_{k=-\infty}^{\infty} \sgn{(k)} g_{-n,k} e^{i k \beta} +
\sum_{k=-\infty}^{-1}2 (-1)^k g_{-n+2k,-k}e^{i \beta k },
\end{align*}
and the Fourier coefficients $ (\HT \bg)_{-n,k}$ satisfy \eqref{Hu_comp}.
\end{proof}

Using the Fourier representation \eqref{Hu_comp} of the Bukhgeim-Hilbert transform (applied twice), it is now easy to see that $\HT$  enjoys the idempotent property.

\begin{cor}\label{newmapping_Hilbert1}
	Let $0< \mu <1$. If  $\bg \in l^{1,1}_{\INF}(\Gam)\cap C^\mu(\Gam;l_1)$, then $\ds \HT^2 \bg  =  - \bg$.
\end{cor}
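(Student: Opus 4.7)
The plan is to apply the Fourier representation \eqref{Hu_comp} twice and verify that the coefficients of $\HT^2\bg$ coincide with those of $-\bg$. Since the components of $\HT\bg$ are continuous and determined by their Fourier series on $\Gam$, it suffices to show that $(\HT^2\bg)_{-n,k}=-g_{-n,k}$ for every $n\ge 0$ and every $k\in\BZ$, and then invoke Fourier uniqueness.

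First I would set $\bh:=\HT\bg$ and abbreviate the Fourier formula \eqref{Hu_comp} as $(-i)h_{-n,k}=F_k(g)_{-n}$, where $F_k(g)_{-n}=g_{-n,k}$ for $k\ge 0$ and $F_k(g)_{-n}=-g_{-n,k}+2(-1)^k g_{-n+2k,-k}$ for $k\le -1$. Applying Theorem \ref{newmapping_Hilbert} once more to $\bh$ (which, being continuous on $\Gam$, has Fourier coefficients) yields $(-i)(\HT^2\bg)_{-n,k}=F_k(h)_{-n}$. The key point is that when $k\le-1$, the second index appearing in the nonlocal term is $-k\ge 1\ge 0$, so the ``good'' case $k\ge 0$ of the formula is used for the reflected coefficient $h_{-n+2k,-k}$.

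For $k\ge 0$ the computation is immediate: $(-i)(\HT^2\bg)_{-n,k}=h_{-n,k}=iF_k(g)_{-n}=ig_{-n,k}$, whence $(\HT^2\bg)_{-n,k}=-g_{-n,k}$. For $k\le -1$, I would plug in
\[
h_{-n,k}=i\bigl(-g_{-n,k}+2(-1)^k g_{-n+2k,-k}\bigr),\qquad h_{-n+2k,-k}=i\,g_{-n+2k,-k},
\]
and observe that the two copies of $2(-1)^k g_{-n+2k,-k}$ appearing in $F_k(h)_{-n}=-h_{-n,k}+2(-1)^k h_{-n+2k,-k}$ cancel exactly, leaving $F_k(h)_{-n}=ig_{-n,k}$; again $(\HT^2\bg)_{-n,k}=-g_{-n,k}$.

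The only mildly delicate point is the justification for applying Theorem \ref{newmapping_Hilbert} to $\bh$, since that theorem is stated for inputs in $l^{1,1}_\infty(\Gam)\cap C^\mu(\Gam;l_1)$ while only $\bh\in C^\mu(\Gam;l_\infty)$ is guaranteed. However, the Fourier coefficient identity \eqref{Hu_comp} was derived directly from the singular integral representation via the kernel calculus in Lemma \ref{singularintegral}, and the same derivation (applied component-wise, since each $h_{-n}$ lies in $L^1(\Gam)$) produces the Fourier coefficients of $(\HT\bh)_{-n}$ from those of $h_{-n}$ termwise, which is all that is needed. Once the coefficient identity $(\HT^2\bg)_{-n,k}=-g_{-n,k}$ is established for all $n,k$, the equality $\HT^2\bg=-\bg$ follows in $C^\mu(\Gam;l_\infty)$.
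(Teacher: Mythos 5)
Your proof is correct and takes essentially the same route as the paper's: apply the Fourier-coefficient formula \eqref{Hu_comp} twice, using that for $k\le -1$ the reflected index $-k$ falls in the $k\ge 0$ branch, so the two copies of $2(-1)^k g_{-n+2k,-k}$ cancel. The regularity caveat you raise about applying Theorem \ref{newmapping_Hilbert} to $\HT\bg$ is one the paper's own argument passes over silently, so if anything you are slightly more careful.
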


\section{ Proof of Theorem \ref{RangeCharac}}\label{Sec:pf_mainTh}

(i) {\bf Necessity:} Since $g$ is angularly odd, \eqref{RTCond_odd} holds.
Since $g$ is real valued, \eqref{RTCond_reality} holds. The identities \eqref {RT_FourierEven} follow by direct calculation, see Lemma \ref{lem:evenness}.

Recall that 
\begin{equation}\label{eq:gstar1}
g=
\left\{
\begin{array}{ll}
Xf ,&\mbox{ on } \Gam_+,\\
-Xf, &\mbox{ on } \Gam_-
\end{array}
\right.
\end{equation}is the trace on $\Gam \times \sph$ of the unique solution
of the boundary value problem 
 
\begin{subequations}\label{bvp_transport_odd}
\begin{align}\label{TransportEqOdd}
\btheta\cdot\nabla u(z,\btheta) &=  2f(z) , \quad (z,\btheta)\in \OM\times\sph, \\  \label{u_Gam-}
u\lvert_{\Gam_{-}} &= - Xf\lvert_{\Gam_-}.
\end{align}
\end{subequations}
Moreover, $u$ must be angularly odd, $u(z,\btheta)=-u(z,-\btheta)$: Indeed, if $u^{odd}(z,\btheta):=\frac{1}{2}\left[u(z,\btheta)-u(z,-\btheta)\right]$ denotes the angularly odd part of $u$, then 
\begin{align*}
\btheta\cdot\nabla u^{odd}(z,\btheta) = \frac{1}{2}\left[\btheta\cdot\nabla u(z,\btheta)+(-\btheta)\cdot\nabla u(z,-\btheta) \right]=2f(z),
\end{align*}and $u^{even}:=u -u^{odd}$ solves
\begin{subequations}\label{bvp_transport_even}
\begin{align}
\btheta\cdot\nabla u^{even}(z,\btheta) &=  0 , \quad (z,\btheta)\in \OM\times\sph, \\  
u^{even}\lvert_{\Gam_{-}} &=0.
\end{align}
\end{subequations}
Since \eqref{bvp_transport_even} has the unique solution $u^{even}\equiv 0$, $u=u^{odd}$ in $\ol\OM\times\sph$.

We will first prove \eqref{RTCond} for $ f \in C^{2}_0(\OM)$ real valued of compact support in $\OM$. The result for $f \in L^1(\OM)$ follows by a density argument.

Since $f$ is real valued, the solution $\ds u(z,\btheta) $ of \eqref{bvp_transport_odd} is also  real valued, and its Fourier modes in the angular variable occur in conjugates, $u_{-n}(z)=\ol{u_n} (z), \, n\in \BZ$. 
Let $\bu$ be  the sequence valued map of the non-positive odd Fourier modes
\begin{align}\label{boldu1}
\OM \ni z\mapsto  \bu(z)&: = \langle u_{-1}(z), u_{-3}(z),u_{-5}(z),... \rangle.
\end{align}
Since $ f \in C^{2}_0(\OM)$,  $ u \in C^{2}(\ol \OM\times \sph)$, in particular $\bu \in C^1(\ol \OM; l_1)$. 

Since $ u \in C^{2}(\ol \OM\times \sph)$, its trace $g=u \lvert_{\Gam \times \sph}\in C^2(\Gam\times\sph)$. We define the sequence valued map $\bg$ on the boundary $\Gam$ by
\begin{align}\label{eq:bg1}
\Gam \ni e^{i \beta} \mapsto  \bg(e^{i \beta}) := \langle g_{-1} (e^{i \beta}), g_{-3}(e^{i \beta}), g_{-5} (e^{i \beta}), ... \rangle, 
\end{align}
where $\ds g_{-n}(e^{i \beta}) = \frac{1}{2 \pi} \int_{-\pi}^{ \pi} g(e^{i \beta}, e^{i\theta}) e^{i n \theta} d\theta$ is the $(-n)$-th Fourier coefficients in the angular variable of the function $g$.


 Since $g=u \lvert_{\Gam \times \sph}$, $\bg = \bu \lvert_{\Gam} \, \in C^1(\Gam; l_1) \subset  l^{1,1}_{\INF}(\Gam) \cap C^{\mu}(\Gam; l^1)$, $\mu >1/2$.

By \eqref{beltrami}, $\bu$ is $L^2$-analytic in $\OM$, and, then, $\bg$ is the boundary value of an $L^2$-analytic map.
 
 By the necessity part in Theorem \ref{NecSuf_BukhgeimHilbert_Thm}, we have $\HT \bg \in C^{\mu}(\Gam; l_\INF)$ and
\begin{align}\label{Hg_char}
(I+i\HT) \bg= \bzero,
\end{align} where $\HT$ is the Bukhgeim-Hilbert operator in \eqref{BHtransform}.
In particular, 
\begin{align}\label{Hgnk_char}
([I+i\HT] \bg)_{n,k}= 0,
\end{align}
for all $n \leq -1$ odd, and $k \in \BZ$.

By Theorem \ref{newmapping_Hilbert}, the Fourier coefficients $\ds (\HT \bg)_{n,k}$, for  $ n \leq -1$ odd, $k \in \BZ$,
satisfy \eqref{Hu_comp}, 
and thus
\begin{align}\label{eq:HT_gnk}
([I+i\HT] \bg)_{n,k}=\left\{
\begin{array}{ll} 0 &\mbox{ if } k\geq 0,\\
2g_{n,k}-2 (-1)^k  g _{n+2k,-k}&\mbox{ if } k\leq -1.
\end{array}
\right.\end{align}

From  \eqref{Hgnk_char}, the Fourier coefficients of $\bg$ must satisfy \eqref{RTCond}:
\begin{align}
	g_{n,k}=  (-1)^k  g_{n+2k,-k}, \text{ for all  } n\leq -1 \text{ odd, } \text{ and } k\leq -1.
\end{align}
Equation \eqref{RTCond} for $k=0$ is trivially satisfied.
 
The proof for $f \in L^1(\OM)$ follows from the density of $C^2_0( \OM)$ in $L^1(\OM)$.

\vspace{1cm}

(ii) {\bf Sufficiency:} Given the double sequence $\{g_{n,k}\}$ for $n \leq -1$ odd, and $k\in \BZ$, we construct a real valued function $f$ in $\OM$ such that  the map on the torus
$\ds \left\{
\begin{array}{ll}
Xf \text{ on }\Gam_+,\\
-Xf \text{ on }\Gam_-\\
\end{array}\right.$
has its  Fourier coefficients  equal to $\{g_{n,k}\}$.

Define first the sequence valued map on the boundary $\Gam$ by
\begin{align}\label{bg_gnk}
\Gam \ni e^{i  \beta} \mapsto \bg^{\text{odd}}(e^{i  \beta}) :=\langle  g_{-1}(e^{i  \beta}), g_{-3}(e^{i  \beta}), g_{-5}(e^{i  \beta})\cdots \cdot \rangle,
\end{align}
where for each $n \leq -1$ odd, 
\begin{align}\label{gnk_bg1}
g_{n}(e^{i  \beta}):=\sum_{k=-\infty}^\infty g_{n,k} \, e^{i k \beta}.
\end{align}
We construct the sequence valued map $\bu^{\text{odd}} (z)$ inside $\OM$, by the Bukhgeim-Cauchy Integral formula \eqref{BukhgeimCauchyFormula}, namely 
\begin{align}\label{u_construction}
\bu ^{\text{odd}}  (z) = \langle u_{-1}(z), u_{-3}(z), u_{-5}(z),  ... \rangle := \B \left[\bg^{\text{odd}}\right] (z), \quad z\in \OM.
\end{align}
Using the decay assumption \eqref{gnk_decay}, by Lemma \ref{prop:bg_gnk} in the appendix,  $\bg^{\text{odd}} \in l^{1,2}_{\INF}(\Gam) \cap C^{1,\mu}(\Gam; l^1)$, and, in particular,
$\bg^{\text{odd}} \in  Y_{\mu}(\Gam)$ for $\mu>1/2$. 

By Theorem \ref{BukhgeimCauchyThm} (ii), the constructed $\bu ^{\text{odd}} \in C^{1,\mu}(\OM;l^{1}) \cap C^{\mu}(\ol \OM;l^{1})$  is $L^2$-analytic in $\OM$,
\begin{align}\label{vneg}
\ol{\del} u_{n} + \del u_{n-2} = 0,\quad n\leq -1, \; n \; \text{odd}.
\end{align}

While $\bu^{\text{odd}}$ constructed in \eqref{u_construction}  is $L^2$-analytic, in general, its trace $ \bu^{\text{odd}} \lvert_{\Gam}$  need not be equal to $\bg^{\text{odd}}$. 
It is at this point that the constraints   \eqref{RTCond} come  into play.
By using the constraints \eqref{RTCond},
 \begin{align*}
g_{n,k}=  (-1)^k  g_{n+2k,-k}, \quad \text{for } \;   n \leq -1 \,\text{odd}, \;\text{and}\; k\leq -1,
\end{align*} in \eqref{Hu_comp}, 
 we obtain that, for each $n \leq -1$, $n$ odd, and $k \in \BZ$,
$\ds([I+i\HT] \bg^{\text{odd}})_{n,k}= 0.$ Thus, $\ds[I+i\HT] \bg^{\text{odd}}= \bzero$, and the sufficiency part of Theorem \ref{NecSuf_BukhgeimHilbert_Thm} applies to yield
\begin{align}\label{buodd-trace_bgodd}
\bu^{\text{odd}} \lvert_{\Gam} = \bg^{\text{odd}}.
\end{align}

%
%
%

All of the positive Fourier modes $u_n, g_n$ for $n\geq 1$ odd are constructed by conjugation,
\begin{align}\label{construct_vpos}
u_{n}&:=\ol{u_{-n}},\quad \text{in}\; \OM,  \\
g_{n}&:=\ol{g_{-n}}, \quad \text{on}\; \Gam. 
\end{align}
Also, by conjugating \eqref{vneg} we note that the positive Fourier modes satisfy
\begin{align*}
\ol{\del} u_{n+2} + \del u_{n} = 0,\quad n\geq 1, \; n \; \text{odd}.
\end{align*}Moreover, using \eqref{buodd-trace_bgodd} they extend continuously to $\Gam$ and
\begin{align*}
u_{n}|_\Gam=\ol{u_{-n}}|_\Gam=\ol{g_{-n}}=g_{n},\quad n\geq 1, \; n \; \text{odd}.
\end{align*}

In summary, we have shown that 

\begin{alignat}{2}\label{allmodes}
&\ol{\del} u_{n} + \del{u_{n-2}} = 0,  && \quad \text{for all odd integers } n\neq1,\\ \label{alluTrace_g}
&u_{n}\lvert_{\Gam} = g_{n}, && \quad \text{for all odd integers } n.
\end{alignat}

Define next the following three real valued  functions: 
\begin{align}\label{definitionU}
u^{\text{odd}}(z, e^{i\theta})&:=  \sum_{\substack{n=- \INF \\ n = \,\text{odd}}}^{\INF}  u_{n}(z)e^{i n\tta}, \quad (z,\theta)\in \OM \times \sph,
\end{align}
\begin{align}\label{definitiong}
g(z, e^{i\theta})&:=  \sum_{\substack{n=- \INF \\ n = \,\text{odd}}}^{\INF}  g_{n}(z)e^{i n\tta}, \quad (z,\theta)\in\Gam\times\sph,
\end{align} 
and 
\begin{equation}\label{fDefnTh3}
f(z) := \re \left ( \del u_{-1}(z) \right ),\quad z\in\OM.
\end{equation}
Note that $u_{-1}\in C^{1,\mu}(\OM)$, yields $f\in C^\mu(\OM)$.

We are left to prove that $g$ defined in \eqref{definitiong} coincides with $Xf$ on $\Gam_+$. 

Since only odd modes are used in the angular variable, $g(z,\cdot)$ is an odd function, and thus satisfy \eqref{sym**}. We claim that $g$ also satisfies the symmetry condition \eqref{sym*}. Indeed,

\begin{align*}g (e^{i (2 \theta -\beta -\pi)}, e^{i(\theta +\pi)}) &=\sum_{\substack{m=- \INF \\m = \,\text{odd}}}^{\INF}  \sum_{p \in \BZ} g_{m,p} e^{i m (\theta+\pi)}  e^{i p (2 \theta -\beta -\pi)}\\
&=  \sum_{\substack{m=- \INF \\ m = \,\text{odd}}}^{\INF}   \sum_{p \in \BZ} (-1)^{m+p}g_{m,p}  e^{i (2p+m) \theta}  e^{-i p \beta }\\
& \xlongequal{ m=n+2k, \;  p=-k}\sum_{\substack{n=- \INF \\ n = \,\text{odd}}}^{\INF}    \sum_{k \in \BZ}  (-1)^{n+k}g_{n+2k,-k} e^{i n \theta}  e^{i k \beta }\\
& \xlongequal{ \eqref{RT_FourierEven}} \sum_{\substack{n=- \INF \\ n = \,\text{odd}}}^{\INF}   \sum_{k \in \BZ} g_{n,k} e^{i n \theta}  e^{i k \beta }=g(e^{i
\beta},e^{i\theta}).\numberthis\label{g_symm*}
\end{align*}

Since $\bu ^{\text{odd}} \in C^{1,\mu}(\OM;l^{1}) \cap C^{\mu}(\ol \OM;l^{1})$, by \cite[Corollary 4.1]{sadiqTamasan01} and
\cite[Proposition 4.1 (iii)]{sadiqTamasan01}, we conclude that $u^{\text{odd}} \in C^{1,\mu}(\OM \times \sph)\cap C^{\mu}(\ol{\OM}\times \sph)$. In particular,  for each $e^{i\theta}\in \sph$, the trace of $u^{\text{odd}}(\cdot,e^{i\theta})$  satisfies
\begin{align} \label{utrace_g}
u^{\text{odd}}(\cdot,e^{i\theta})\lvert_{\Gam} &=\left.\left ( \sum_{\substack{n= - \INF \\ n = \,\text{odd}}}^{\INF}    u_{n}e^{i n\tta}\right) \right\lvert_{\Gam}
= \sum_{\substack{n= - \INF \\ n = \,\text{odd}}}^{\INF}   \left ( u_{n}\lvert_{\Gam} \right)  e^{i n\tta} 
= \sum_{\substack{n=- \INF \\ n = \,\text{odd}}}^{\INF}    g_{n}e^{i n\tta} = g(\cdot,e^{i\theta}),
\end{align} 
where the third equality above uses \eqref{alluTrace_g}. Since $u^{\text{odd}} \in C^{\mu}(\ol{\OM}\times \sph)$, its trace $g \in C^{\mu}(\Gam \times \sph)$.

Since $u^{\text{odd}}\in C^{1,\mu}(\OM \times \sph)$,  the formal calculation in \eqref{eq:transport_fourier}  is now justified for each $z \in \OM$. 
For each direction $\btheta=\langle\cos\theta,\sin\theta\rangle$, we obtain
\begin{align}\label{transport_uf}
\btheta\cdot\nabla u^{\text{odd}}(z, e^{i\theta})
&=
2 \re \left ( \del u_{-1}(z) \right ) + 2\re\left\{ \sum_{\substack{n=1\\ n = \,\text{odd}}}^{\INF}   \left(\dba u_{-n}(z) + {\del} u_{-n-2}(z)\right)e^{-in\tta}\right\} =2f(z),
\end{align}  
where the second equality uses \eqref{allmodes} and \eqref{fDefnTh3}.

In the following calculation we let $ l(\beta, \theta)  = \lvert e^{i \beta} -e^{i (2\theta -\beta-\pi)}  \rvert$ denote the length of the chord in Figure \ref{fig:fanbeam1}, and use the geometric equality
$e^{i\beta}-l(\beta,\theta)e^{i\theta}=e^{i(2\theta-\beta-\pi)}$. For each $(e^{i \beta},e^{i \theta}) \in \Gam_+$,
\begin{align*}
2 g(e^{i \beta},e^{i \theta}) &= g(e^{i \beta},e^{i \theta})- g(e^{i\beta},e^{i (\theta+\pi)})\\ 
&= g(e^{i \beta},e^{i \theta})- g(e^{i (2\theta -\beta-\pi)},e^{i \theta})\\ 
& =u^{\text{odd}}(e^{i \beta},e^{i \theta})- u^{\text{odd}}(e^{i (2\theta -\beta-\pi)},e^{i \theta}) 
 \\ 
 &
= \int_{-  l(\beta, \theta) }^0  \btheta\cdot\nabla u ^{\text{odd}}(e^{i \beta}+t e^{i \theta}, e^{i\theta})dt\\ 
&=\int_{- l(\beta, \theta) }^0 2f (e^{i \beta}+t e^{i \theta})dt 
= 2[Xf] (e^{i \beta},e^{i \theta}), \numberthis  \label{eq:g_Xf}
\end{align*}
where the first equality uses $g(e^{i\beta},\cdot)$ is angularly odd, the second equality uses the symmetry relation \eqref{g_symm*} with $\theta$ replaced by $\theta+\pi$, 
the third equality uses \eqref{utrace_g},
the fourth equality is the fundamental theorem of calculus, the fifth 
equality uses \eqref{transport_uf}, and the last equality uses the support of $f$ in $\OM$. 

Therefore $g =Xf$ on $\Gam_+$, and, since $g$ is angularly odd, $g=-Xf$ on $\Gam_-$.

The equation \eqref{eq:g_Xf} also shows that $f$ integrates along all lines in the direction of $\btheta$, and thus $f\in L^1(\OM)$.


 \qed  



\section{Connection with the Gelfand-Graev-Helgason-Ludwig moment conditions}



In retrospect, the necessity of the constraints \eqref{RTCond}:
\begin{align*}
g_{n,k}=  (-1)^k  g_{n+2k,-k}, \text{ for all } n\leq -1 \text{ odd,  and all } k\leq 0,
\end{align*}
can be inferred directly from the moment conditions of Gelfand-Graev, Helgason, and Ludwig (GGHL) as follows.

For $\OM$ the unit disc and $f\in L^1(\OM)$,  recall its Radon transform $\displaystyle Rf(s,\bomega)=\int_{-1}^1f(s\bomega+ t\bomega^\perp)dt.$

Original formulation of the GGHL- moment conditions state that: for each integer $p\geq 0$, the map $ \displaystyle\sph\ni\bomega\mapsto \int_{-1}^1s^{p} Rf (s, \bomega)ds$ is a homogeneous polynomial of degree $p$. We work with the equivalent formulation below (e.g., \cite{nattererBook}; see also the appendix).
\begin{prop}[GGHL- moment conditions]\label{HLmoments} 
Let $f\in L^1(\OM)$, and $p\in\BZ$ with $p\geq 0$. Then, 
\begin{equation} \label{eq:Moment_Orthogonality}
\int_{-\pi}^{\pi} e^{i m \omega } \int_{-1}^{1} s^{p} Rf (s, e^{i \omega}) \, ds \, d \omega = 0, \text { for all }
\end{equation}

(i) $m\in\BZ$ with $p-m$ odd, or

(ii) $m\in \BZ$ with $|m|>p$ and $p-m$ even.
\end{prop}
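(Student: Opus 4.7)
The plan is to deduce Proposition~\ref{HLmoments} as a Fourier-series consequence of the classical homogeneous-polynomial formulation of the Gelfand-Graev-Helgason-Ludwig moment conditions, recalled immediately before the statement. Setting $\ds M_p(\bomega):=\int_{-1}^{1}s^p Rf(s,\bomega)\,ds$, the first step is to rewrite $M_p$ as an integral over the unit disc. By Fubini's theorem and the change of variables $x=s\bomega+t\bomega^\perp$ (whose Jacobian is $1$ since $(\bomega,\bomega^\perp)$ is orthonormal), one obtains
\[
M_p(\bomega)=\int_{\OM}(x\cdot\bomega)^p f(x)\,dx,
\]
and expanding $(x\cdot\bomega)^p=\sum_{k=0}^{p}\binom{p}{k}x_1^k x_2^{p-k}\omega_1^k\omega_2^{p-k}$ by the binomial theorem exhibits $M_p$ as a homogeneous polynomial of degree $p$ in the Cartesian components $\bomega=(\omega_1,\omega_2)=(\cos\omega,\sin\omega)$. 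This is the classical GGHL characterization.

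The second step is to convert this polynomial statement into a statement about Fourier modes on $\sph$. Every degree-$p$ homogeneous polynomial in $(\cos\omega,\sin\omega)$ is a linear combination of monomials $(\cos\omega)^a(\sin\omega)^b$ with $a+b=p$. Using $\cos\omega=(e^{i\omega}+e^{-i\omega})/2$, $\sin\omega=(e^{i\omega}-e^{-i\omega})/(2i)$, and the binomial theorem, each such monomial is a linear combination of $e^{iq\omega}$ with $q$ in the arithmetic progression $\{-p,-p+2,\ldots,p-2,p\}$. Summing over $a+b=p$, the Fourier support of $\omega\mapsto M_p(e^{i\omega})$ is contained in this set; equivalently, its Fourier coefficient at any mode $m$ vanishes unless $|m|\leq p$ and $p-m$ is even.

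The final step is to observe that the left-hand side of \eqref{eq:Moment_Orthogonality} equals $2\pi$ times the Fourier coefficient of $M_p$ at mode $-m$, and the vanishing conditions ``$p-m$ odd'' and ``$|m|>p$ with $p-m$ even'' are invariant under $m\mapsto-m$ (the differences $p-m$ and $p+m$ share the same parity, and $|m|=|-m|$). Both cases~(i) and~(ii) of the proposition therefore follow at once from the Fourier support description obtained in the previous step.

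The only point requiring genuine care is the derivation of the representation $M_p(\bomega)=\int_{\OM}(x\cdot\bomega)^p f(x)\,dx$ for arbitrary $f\in L^1(\OM)$; once this Fubini step is in hand the remainder is elementary Fourier bookkeeping. I anticipate that the classical derivation will be placed in the appendix, so that the body of the proof of Proposition~\ref{HLmoments} reduces to a short paragraph translating the homogeneous-polynomial form into the Fourier-mode form.
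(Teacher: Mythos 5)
Your proposal is correct and follows essentially the same route as the paper's proof in the appendix: Fubini plus the orthonormal change of variables to write the moment as $\int_\OM(x\cdot\bomega)^p f\,dx$, binomial expansion in $e^{\pm i\omega}$ to locate the Fourier support of $\omega\mapsto M_p(e^{i\omega})$ in $\{-p,-p+2,\dots,p\}$, and then reading off both vanishing conditions. The paper packages the second step by factoring out $e^{-ip\omega}$ and exhibiting a polynomial in $e^{2i\omega}$ of degree $2p$, but this is only a cosmetic difference from your direct support description.
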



\begin{figure}[ht]
 \centering

  \pgfmathsetmacro{\Radius}{1.35} 
  \pgfmathsetmacro{\RRadius}{1.4*\Radius}
  \pgfmathsetmacro{\Height}{1.25}
  \pgfmathsetmacro{\startAngle}{asin(\Height/\Radius)}
  
   \pgfmathsetmacro{\ptAngle}{\startAngle}
   \pgfmathsetmacro{\alphaAngle}{0.5*\startAngle}
   \pgfmathsetmacro{\thetaAngle}{\startAngle + \alphaAngle} 
   \pgfmathsetmacro{\gammaAngle}{\startAngle + 2*\alphaAngle-180} 
   \pgfmathsetmacro{\omegaAngle}{\thetaAngle - 90} 
   \pgfmathsetmacro{\xbpt}{cos(\ptAngle)*\Radius}
   \pgfmathsetmacro{\ybpt}{sin(\ptAngle)*\Radius} 
   
   \pgfmathsetmacro{\xpt}{cos(\ptAngle)*1.1*\RRadius}
   \pgfmathsetmacro{\ypt}{sin(\ptAngle)*1.1*\RRadius} 
   
   \pgfmathsetmacro{\Lxpt}{\xbpt + cos(\thetaAngle)}
   \pgfmathsetmacro{\Lypt}{\ybpt + sin(\thetaAngle)}
   
   \pgfmathsetmacro{\Lxxpt}{\xbpt - cos(\thetaAngle)*2.1*\Radius}
   \pgfmathsetmacro{\Lyypt}{\ybpt - sin(\thetaAngle)*2.1*\Radius}
   
   \pgfmathsetmacro{\omegaxpt}{cos(\omegaAngle)*\RRadius}
   \pgfmathsetmacro{\omegaypt}{sin(\omegaAngle)*\RRadius}

   \pgfmathsetmacro{\gammaxpt}{cos(\gammaAngle)*\Radius}
   \pgfmathsetmacro{\gammaypt}{sin(\gammaAngle)*\Radius}
   
   \pgfmathsetmacro{\sxpt}{cos(\omegaAngle)*0.37*\RRadius}
   \pgfmathsetmacro{\sypt}{sin(\omegaAngle)*0.37*\RRadius}

\begin{tikzpicture}[scale=1.5,cap=round,>=latex]

\tikzset{
    thick/.style=      {line width=0.8pt},
    very thick/.style= {line width=1.1pt},
    ultra thick/.style={line width=1.6pt}
}

  \coordinate[label=below:$0$] (O) at (0,0,0);
  \filldraw[black] (O) circle(1.2pt);
  \draw[thick] (O) circle (\Radius cm);
  
  \coordinate[label=above:$\Omega$] (OM) at(120:0.85cm);
  
  \coordinate (E) at (\RRadius,0,0);
  \draw[dashed] (O) -- (E);

  \coordinate (P) at (\xbpt,\ybpt);
  \filldraw[black] (P) circle(1.2pt);
  
  \coordinate[label=right:$e^{ i \beta}$] (Z) at (\xbpt+0.025,\ybpt+0.1);


  \coordinate[label=above:$\nu$] (nu) at (\xpt,\ypt);
  \draw[thick,->] (O) -- (nu);
  
  \coordinate[label=above:$\btheta$] (L) at  (\Lxpt,\Lypt);
  \draw[thick,->] (P) -- (L);
  
  \coordinate (LL) at  (\Lxxpt,\Lyypt);
  \draw[thick,-] (P) -- (LL);
  
  \coordinate[label=below:$\bomega$] (W) at  (\omegaxpt,\omegaypt);
  \coordinate[label=above:$\nu$] (nu) at (\xpt,\ypt);
  \draw[thick,->] (O) -- (W);
  
    \tikzset{
	position label/.style={
	  above = 3pt,
	  text height = 1.5ex,
	  text depth = 1ex
	},
      brace/.style={
	decoration={brace,mirror},
	decorate
      }
    }  
    \coordinate (S) at  (\sxpt,\sypt);
  \draw [brace,decoration={raise=0.4ex}] [brace]  (O.north) -- (S.north) node [position label, below=3.8, pos=0.4, rotate = 15,scale=0.9] {$s$};
  \pic[draw,->,angle radius=.7cm,angle eccentricity=1.3,"$\beta$"] {angle=E--O--P}; 
  \pic[draw,->,angle radius=.7cm,angle eccentricity=1.3,"$\alpha$"] {angle=nu--P--L}; 
  \pic[draw,-,angle radius=.7cm,angle eccentricity=1.3,"$\alpha$"] {angle=O--P--LL};

\end{tikzpicture}
 \caption{$ e^{i \beta}\in \Gam, \;  \btheta=(\cos \theta, \sin \theta), \;   e^{i \theta} = e^{i(\alpha +\beta)} = \bomega^{\perp}, \; \bomega= e^{i(\alpha +\beta - \frac{\pi}{2})}.$
 }
\label{fig:fanbeam}
\end{figure}
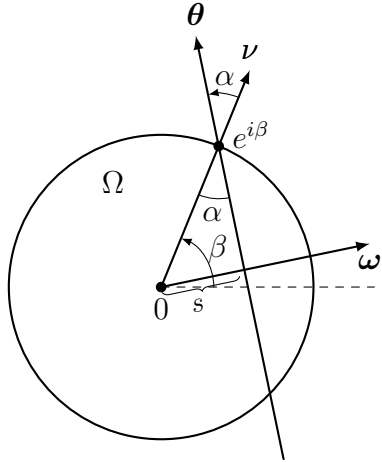

In two dimensions, the $X$-ray and the Radon transform of $f$ are connected by the reparametrization
\begin{align}\label{eq:XrayRadon}
Xf (e^{i \beta}, e^{i(\alpha +\beta)})= Rf(\sin \alpha, e^{i(\alpha +\beta - \frac{\pi}{2})})  \quad \text{ for } \;  |\alpha| \leq \frac{\pi}{2}\text{ and } \beta\in(-\pi,\pi].
\end{align}
Note that \eqref{eq:XrayRadon} only accounts for points on the torus in $\Gam_+$. However, this will be sufficient since
 $Xf$ is an angularly even function satisfying  \eqref{X_ray_sym**}. 
 
 Recall that the conditions \eqref{RTCond} refer only to the odd angular modes. Since the odd angular modes are preserved upon addition with the modes of an angularly even function, suffices to prove  \eqref{RTCond} for
\begin{equation}\label{g0}
g := Xf + \left\{
		\begin{array}{ll}
			Xf \text{ on }\Gam_+,\\
			-Xf \text{ on }\Gam_-
		\end{array}
	\right.
	=
	\left\{
		\begin{array}{ll}
			2\left[Xf \right]\text{ on }\Gam_+,\\
			0 \text{ on }\Gam_-.
		\end{array}
	\right.
\end{equation}

Using the relation \eqref {eq:XrayRadon}, the change of variable $s=\sin\alpha$ in \eqref{eq:Moment_Orthogonality}, and the fact that $g$ vanishes on $\Gam_-$, one easily obtains the reformulation of the moment conditions below.

\begin{cor}[X-ray moment conditions]\label{Xmoments} 
For $f\in L^1(\OM)$, let $g\in L^1(\Gam\times\sph)$ be defined by \eqref{g0}, and $p\in\BZ$ with $p\geq 0$. Then, 
\begin{equation} \label{eq:MomentCond_fanbeam}
\int_{-\pi}^{\pi} \int_{-\pi}^{\pi} e^{i m (\beta +\alpha)} (\sin \alpha)^{p} \, \cos \alpha \, g (e^{i \beta}, e^{i(\alpha +\beta)})   \,d \alpha \, d \beta=0, \text{ for all }
\end{equation}

(i) $m\in\BZ$ with $p-m$ odd, or

(ii) $m\in \BZ$ with $|m|>p$ and $p-m$ even.
\end{cor}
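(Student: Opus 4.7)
The strategy is to derive the X-ray moment conditions directly from the Radon moment conditions in Proposition \ref{HLmoments} via the reparametrization \eqref{eq:XrayRadon}, exploiting that $g$ vanishes on $\Gam_-$.

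The plan is as follows. Since $g \equiv 0$ on $\Gam_-$, the $\alpha$-integral in \eqref{eq:MomentCond_fanbeam} effectively runs only over $|\alpha| \leq \pi/2$, precisely the range in which \eqref{eq:XrayRadon} is valid. On this range, $g = 2Xf$, and thus
\begin{align*}
\int_{-\pi}^{\pi} \int_{-\pi}^{\pi} & e^{i m (\beta +\alpha)} (\sin \alpha)^{p} \, \cos \alpha \, g (e^{i \beta}, e^{i(\alpha +\beta)})   \,d \alpha \, d \beta \\
&= 2\int_{-\pi}^{\pi} \int_{-\pi/2}^{\pi/2} e^{i m (\beta +\alpha)} (\sin \alpha)^{p} \, \cos \alpha \, Rf\!\left(\sin \alpha, e^{i(\alpha +\beta - \pi/2)}\right) d \alpha \, d \beta.
\end{align*}

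Next I perform the change of variables $s = \sin\alpha$ (so $ds = \cos\alpha\, d\alpha$, with $\alpha \in [-\pi/2,\pi/2]$ mapping bijectively to $s \in [-1,1]$), and $\omega = \alpha + \beta - \pi/2$ in place of $\beta$ (with $\alpha$ fixed, $d\beta = d\omega$). This transforms $e^{im(\beta+\alpha)} = e^{im(\omega + \pi/2)}$. Although $\omega$ formally ranges over a $2\pi$-length interval that depends on $\alpha$, by $2\pi$-periodicity of the integrand in $\omega$ the integral equals the integral over $(-\pi,\pi]$. Thus the displayed quantity becomes
\begin{equation*}
2\, e^{im\pi/2} \int_{-\pi}^{\pi} e^{i m \omega} \int_{-1}^{1} s^{p} \, Rf(s, e^{i \omega})\, ds \, d \omega,
\end{equation*}
which vanishes under either hypothesis (i) or (ii) by Proposition \ref{HLmoments}.

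There is no real obstacle; the only point that deserves care is the bookkeeping of the two changes of variables and the use of $2\pi$-periodicity to preserve the $(-\pi,\pi]$ interval for $\omega$. Once this is checked, the corollary follows immediately.
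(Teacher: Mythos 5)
Your argument is correct and is exactly the route the paper takes: it derives \eqref{eq:MomentCond_fanbeam} from Proposition \ref{HLmoments} via the reparametrization \eqref{eq:XrayRadon}, the substitution $s=\sin\alpha$ (with the accompanying shift $\omega=\alpha+\beta-\pi/2$ absorbed by $2\pi$-periodicity), and the fact that $g$ vanishes on $\Gam_-$ so the $\alpha$-integral reduces to $|\alpha|\le\pi/2$ where $g=2Xf$. The only cosmetic difference is direction: the paper transforms the Radon-side identity into the fan-beam integral, while you transform the fan-beam integral back to the Radon side; the computation is the same.
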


\begin{remark}\label{GGHL=RTCond}
The GGHL-moment conditions as reformulated in \eqref{eq:MomentCond_fanbeam} yield the moment conditions in \eqref{RTCond}. 
\end{remark}
\begin{proof}

We use Corollary \ref{Xmoments} part (ii) with $|m|>p$, and two separate cases:  $p$ and $m$ both even, and $p$ and $m$ both odd. 

Case 1:   $ |m| > p$, and $p$ and $m$ both even. 

Since for all $n\geq 0$, 
$\displaystyle span \left\{ \cos \alpha \, (\sin \alpha)^{2j},  \; 0 \leq j \leq n \right\} = span \left\{ \cos [(2j+1)\alpha], \; 0 \leq j \leq n \right\}$,
the orthogonality in \eqref{eq:MomentCond_fanbeam} for this case becomes 
\begin{equation} \label{eq:MomentCond_pEvenMeven}
\begin{aligned}
0&=\int_{-\pi}^{\pi} \int_{-\pi}^{\pi} e^{i m (\beta +\alpha)  }  \cos [(p+1)\alpha] \; g(e^{i \beta}, e^{i(\alpha +\beta)})  \,d \alpha \, d \beta\\
&=\frac{1}{2} \int_{-\pi}^{\pi} \int_{-\pi}^{\pi} e^{i m (\beta +\alpha)  } e^{i(p+1)\alpha} g(e^{i \beta}, e^{i(\alpha +\beta)})  \,d \alpha \, d \beta 
\\
&\qquad
+ \frac{1}{2}\int_{-\pi}^{\pi} \int_{-\pi}^{\pi} e^{i m (\beta +\alpha)  } e^{-i(p+1)\alpha} g(e^{i \beta}, e^{i(\alpha +\beta)})  \,d \alpha \, d \beta.
\end{aligned}
\end{equation}

In the last equality of \eqref{eq:MomentCond_pEvenMeven} let us consider the first term,
\begin{equation} \label{eq:MC_pEvenMeven2_first}
\begin{aligned}
&\int_{-\pi}^{\pi} \int_{-\pi}^{\pi} e^{i m (\beta +\alpha)  } e^{i(p+1)\alpha} g(e^{i \beta}, e^{i(\alpha +\beta)})  \,d \alpha \, d \beta  \\
&\quad \xlongequal{ \theta = \alpha +\beta} \int_{-\pi}^{\pi} \int_{-\pi +\beta}^{\pi+ \beta}  e^{i m \theta  } e^{i(p+1)(\theta - \beta )}   g(e^{i \beta}, e^{i \theta })  \,d \theta \, d \beta  \\
&\quad= \int_{-\pi}^{\pi} \int_{-\pi}^{\pi}  e^{i \theta (m+p+1)  } e^{- i(p+1) \beta }    g(e^{i \beta}, e^{i \theta })   \,d \theta \, d \beta  = (2 \pi)^2 g_{-m-p-1,p+1}.
\end{aligned}
\end{equation}
Similarly, the last term in \eqref{eq:MomentCond_pEvenMeven} rewrites
\begin{equation} \label{eq:MC_pEvenMeven2_sec}
\begin{aligned}
&\int_{-\pi}^{\pi} \int_{-\pi}^{\pi} e^{i m (\beta +\alpha)  } e^{-i(p+1)\alpha} g(e^{i \beta}, e^{i(\alpha +\beta)})  \,d \alpha \, d \beta \\
&\quad \xlongequal{ \theta = \alpha +\beta} \int_{-\pi}^{\pi} \int_{-\pi +\beta}^{\pi+ \beta}  e^{i m \theta  } e^{-i(p+1)(\theta - \beta )}    g(e^{i \beta}, e^{i \theta })  \,d \theta \, d \beta  \\
&\quad= \int_{-\pi}^{\pi} \int_{-\pi}^{\pi}  e^{i \theta (m-p-1)  } e^{ i(p+1) \beta }    g(e^{i \beta}, e^{i \theta })  \,d \theta \, d \beta  = (2 \pi)^2 g_{-m+p+1,-p-1}.
\end{aligned}
\end{equation}
Using \eqref{eq:MC_pEvenMeven2_first}, and \eqref{eq:MC_pEvenMeven2_sec}, the expression in \eqref{eq:MomentCond_pEvenMeven} yields
\begin{equation*}
\begin{aligned}
g_{-m-p-1,p+1} = -g_{-m+p+1,-p-1}, \quad \text{for} \; m,p \; \text{even,} \; |m|>p, \;\text{and } p \geq 0.
\end{aligned}
\end{equation*}
By setting $k=-p-1$ (odd) and $|m| \geq p+2 = -k+1$, and  $n=-m-k$ (odd), we obtain, in particular,
\begin{equation} \label{eq:MC_nOdd_kOdd_condition}
\begin{aligned}
g_{n,k} = (-1)^k g_{n+2k,-k}, \text{ for all odd } k \leq -1,
	\text{ and all odd }  n\leq -1.	
\end{aligned}
\end{equation}





Case 2: We consider \eqref{eq:MomentCond_fanbeam}
 for all $ p  \geq 0, \;  |m| > p,$ and {$p$ and $m$ both odd}.
 
Since for all $n\geq 0$, 
$\displaystyle span \left\{ \cos \alpha \, (\sin \alpha)^{2j+1}, \; 0 \leq j \leq n \right\} = span \left\{ \sin [(2j+2)\alpha], \; 0 \leq j \leq n \right\}$,
the orthogonality in \eqref{eq:MomentCond_fanbeam} for this case becomes 
 \begin{equation} \label{eq:MomentCond_pOddModd}
\begin{aligned}
0&=\int_{-\pi}^{\pi} \int_{-\pi}^{\pi} e^{i m (\beta +\alpha)  }  \sin [(p+1)\alpha] \; g(e^{i \beta}, e^{i(\alpha +\beta)})  \,d \alpha \, d \beta\\
&=\frac{1}{2 i} \int_{-\pi}^{\pi} \int_{-\pi}^{\pi} e^{i m (\beta +\alpha)  } e^{i(p+1)\alpha} g(e^{i \beta}, e^{i(\alpha +\beta)})  \,d \alpha \, d \beta 
\\
&\qquad
- \frac{1}{2i}\int_{-\pi}^{\pi} \int_{-\pi}^{\pi} e^{i m (\beta +\alpha)  } e^{-i(p+1)\alpha} g(e^{i \beta}, e^{i(\alpha +\beta)})  \,d \alpha \, d \beta.
\end{aligned}
\end{equation}


Using \eqref{eq:MC_pEvenMeven2_first} and \eqref{eq:MC_pEvenMeven2_sec}, 
the expression in \eqref{eq:MomentCond_pOddModd}  yields
\begin{equation*}
\begin{aligned}
g_{-m-p-1,p+1} = g_{-m+p+1,-p-1}, \quad \text{for} \; m,p \; \text{odd,} \; |m|>p, \;\text{and } p \geq 0.
\end{aligned}
\end{equation*}By setting $k=-p-1$ (even), $|m| \geq p+2 = -k+1$, and  $n=-m-k$ (odd), we obtain 
\begin{equation} \label{eq:MC_nOdd_kEven_condition}
\begin{aligned}
g_{n,k} = (-1)^k g_{n+2k,-k}, \text{ for all even } k \leq -2, \text{ and all odd } n\leq -1.	
\end{aligned}
\end{equation}

Using \eqref{eq:MC_nOdd_kOdd_condition} and \eqref{eq:MC_nOdd_kEven_condition}, 
\begin{align*}
g_{n,k} = (-1)^k g_{n+2k,-k}, \text{ for all } k \leq -1, \text{ and all odd }n\leq -1.
\end{align*}

The relation above trivially holds for $k=0$.  
\end{proof}

In proving \eqref{RTCond}, we used only the part (ii) of Proposition \ref{HLmoments}. Part (i) of Proposition \ref{HLmoments} yields relations already included in the symmetry \eqref{RT_FourierEven}.

\section*{Acknowledgment}
The work of K.~ Sadiq  was supported by the Austrian Science Fund (FWF), Project P31053-N32, and by the FWF Project F6801–N36 within the Special Research Program SFB F68 “Tomography Across the Scales”. 
The work of A.~Tamasan  was supported in part by the National Science Foundation DMS-1907097.

\appendix

\section{Elementary results}
To improve the readability, we moved the proof of the more elementary claims to this section. The presentation follows the order of their occurrence. Recall  $\OM = \{ z  \in \BC : |z| <1  \}$ is the complex unit disc,  $\Gam = \{ z  \in \BC : |z| =1  \}$ is its boundary, and $\sph$ is the set of unit directions.

\begin{prop}\label{prop:fLp_regularity}
	Let $f \in L^1(\OM)$. If
	\begin{align*}
		\textnormal{supp }f  \subset \{ z: \lvert z \rvert \leq\sqrt{ 1- \delta^2}  \}, \;  0 < \delta <1, \quad \textnormal{or} \quad  f \in L^p(\OM), \;p >2,
	\end{align*}
	then $Xf  \in L^1(\Gamma\times\sph)$.
\end{prop}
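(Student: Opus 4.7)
The plan is to reduce everything to an integrability statement for the Radon transform by means of the fan-beam reparametrization \eqref{eq:XrayRadon}. By the angular symmetry \eqref{X_ray_sym**}, the integrals of $|Xf|$ over $\Gam_+$ and $\Gam_-$ coincide (the map $\theta\mapsto\theta+\pi$ is a measure-preserving bijection $\Gam_-\to\Gam_+$), so I will focus on $\Gam_+$. Writing $(\beta,\theta)=(\beta,\beta+\alpha)$ with $|\alpha|<\pi/2$, substituting $s=\sin\alpha$ (Jacobian $1/\sqrt{1-s^2}$), and translating $\omega=\beta+\arcsin s-\pi/2$ for each fixed $s$, the relation \eqref{eq:XrayRadon} converts the integral of $|Xf|$ over $\Gam_+$ into
\[
\int_{-1}^{1}\frac{1}{\sqrt{1-s^2}}\int_{-\pi}^{\pi}|Rf(s,e^{i\omega})|\,d\omega\,ds.
\]
The task is then to check that this weighted integral is finite under either of the two hypotheses.

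Under the first hypothesis I would observe that $\supp f\subset\{|z|\leq\sqrt{1-\delta^2}\}$ forces $Rf(s,\cdot)\equiv 0$ whenever $|s|>\sqrt{1-\delta^2}$, so that on the effective region of integration the weight is bounded by $1/\delta$. Fubini and the elementary estimate $\int_{-\pi}^{\pi}\int_{-1}^{1}|Rf|\,ds\,d\omega\leq 2\pi\|f\|_{L^1(\OM)}$ then finish this case immediately.

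Under the second hypothesis I would run a two-step H\"older argument. First, applying H\"older along each chord to $Rf(s,e^{i\omega})=\int_{-\sqrt{1-s^2}}^{\sqrt{1-s^2}}f(s\bomega+t\bomega^\perp)\,dt$ and using that the map $(s,t)\mapsto s\bomega+t\bomega^\perp$ is a rotation of $\OM$, I obtain $\|Rf\|_{L^p([-1,1]\times\sph)}\lesssim \|f\|_{L^p(\OM)}$. A second H\"older inequality in $\omega$ upgrades this to $G(s):=\int_{-\pi}^{\pi}|Rf(s,e^{i\omega})|\,d\omega\in L^p([-1,1])$. A final H\"older application against the weight yields
\[
\int_{-1}^{1}\frac{G(s)}{\sqrt{1-s^2}}\,ds\leq \|G\|_{L^p}\,\bigl\|(1-s^2)^{-1/2}\bigr\|_{L^{p'}([-1,1])},
\]
and the weight lies in $L^{p'}$ precisely when $p'<2$, i.e., $p>2$, exactly the standing assumption.

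The principal obstacle is the endpoint singularity $(1-s^2)^{-1/2}$ at $s=\pm 1$, which reflects the concentration of fan-beam parameters near the tangential variety $\Gam_0$. Either a quantitative separation of $\supp f$ from $\Gam$ kills the singularity outright, or enough $L^p$-integrability of $f$ places the weight in the dual Lebesgue space; the threshold $p>2$ is sharp for the latter, which explains the precise form of the hypothesis.
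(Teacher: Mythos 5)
Your argument is correct and takes essentially the same route as the paper's: both reduce the torus integral to the Jacobian weight $(1-s^2)^{-1/2}$ arising from the substitution $s=\sin\alpha$, bound that weight by $1/\delta$ when $\supp f$ is separated from $\Gam$, and otherwise pair it against $f\in L^p$ via H\"older with conjugate exponent $p'<2$ (the same sharp threshold $p>2$). The only cosmetic difference is that you route the computation through $Rf$ and apply H\"older in three stages, whereas the paper works directly with the rotated functions $f_\theta$ on the unit square and applies H\"older once.
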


\begin{proof}
	For $e^{i \beta} \in \Gam$ and $e^{i\theta} \in \sph$,  the $X$-ray transform of $f$ (extended by 0 outside $\OM$) is given by
	\begin{align*}
		Xf(e^{i \beta}, e^{i \theta}) = \int_{-\INF}^{\INF} f(e^{i \beta} +t e^{i \theta}) dt= \int_{-\INF}^{\INF} f_{\theta}(e^{i (\beta-\theta)} +t ) dt,
	\end{align*}	where $$f_\theta (z) := f(ze^{i \theta})$$is obtained from $f$ by a rotation by angle $\theta$. 
	
	We estimate 
	\begin{align*}
		\lnorm{Xf}&_{L^1(\Gamma\times\sph)} = \frac{1}{(2\pi)^2}    \int_{-\pi}^{\pi}
		\int_{-\pi}^{\pi} \lvert  Xf(e^{i \beta}, e^{i \theta}) \rvert d \beta d \theta \\
		& \leq  \frac{1}{(2\pi)^2}  \int_{-\pi}^{\pi}
		\int_{-\pi}^{\pi} \int_{-\INF}^{\INF} \lvert  f_{\theta}(e^{i (\beta-\theta)} +t ) \rvert  dt  d \beta d \theta  \\
		&  \xlongequal{ \alpha = \beta-\theta } 2\frac{1}{(2\pi)^2}    \int_{-\pi}^{\pi}
		\int_{-\pi/2}^{\pi/2} \int_{-\INF}^{\INF} \lvert  f_{\theta}(e^{i \alpha} +t ) \rvert  dt d \alpha d \theta \\
		&  \xlongequal{s = \sin \alpha } \frac{1}{2\pi^2}    \int_{-\pi}^{\pi}
		\int_{-\INF}^{\INF} \int_{-1}^{1}  \frac{\lvert f_{\theta}(\sqrt{1-s^2}+t + is ) \rvert }{\sqrt{1-s^2}}  ds dt d \theta \\
		&  \xlongequal{u = t + \sqrt{1-s^2} }\frac{1}{2\pi^2}    \int_{-\pi}^{\pi}
		\int_{-\INF}^{\INF} \int_{-1}^{1} \frac{ \lvert  f_{\theta}(u +is ) \rvert  }{\sqrt{1-s^2}}    ds du d \theta \\  \numberthis \label{eq:Xfnorm}
		&	= \frac{1}{2\pi^2}    \int_{-\pi}^{\pi}
		\int_{-1}^{1} \int_{-1}^{1} \frac{ \lvert  f_{\theta}(u+i s ) \rvert  }{\sqrt{1-s^2}}   ds du d \theta,
	\end{align*}	where the last equality uses $ \supp f_\theta \subset \OM$.
	
	If $\supp f  \subset  \{ z: \lvert z \rvert \leq\sqrt{ 1- \delta^2}  \}$,  then 
	\begin{align*}
		\lnorm{Xf}_{L^1(\Gamma\times\sph)} & \leq \frac{1}{2\pi^2}    \int_{-\pi}^{\pi}
		\int_{-1}^{1} \int_{-\sqrt{1-\delta^2}}^{\sqrt{1-\delta^2}} \frac{ \lvert  f_{\theta}(u+is ) \rvert  }{\sqrt{1-s^2}}    ds du d \theta \\
		& \leq \frac{1}{2\pi^2}    \frac{1}{\delta} \int_{-\pi}^{\pi}
		\lnorm{f_{\theta}}_{L^1(\OM)} d \theta 
		= \frac{1}{\pi \delta }  \lnorm{f}_{L^1(\OM)}.
	\end{align*}	
	
	Next we  consider  $f \in L^p(\OM)$, $p >2$. Let also $T:=(-1,1)\times(-1,1)$ denote the unit square. 
	Since $\OM\subset T$, for every $\theta\in(-\pi,\pi]$, $\ds f_\theta\in L^p(T)$, and  $\ds \lVert f_\theta\lVert_{L^p(T)}=\lVert f\lVert_{L^p(\OM)}$. 
	
	Let $q=\frac{p}{p-1}$ be the conjugate index of $p$. Since $p>2$, $q<2$, the map $\ds  T  \ni(u,s) \mapsto \frac{ 1 }{\sqrt{1-s^2}}$ is in $ L^q(T)$, and 
	$\ds \lnorm{\frac{ 1}{\sqrt{1-(\cdot)^2}}}_{L^q(T)}=2\lnorm{\frac{ 1}{\sqrt{1-(\cdot)^2}}}_{L^q( -1,1)}$. An application of the Hölder's inequality in \eqref{eq:Xfnorm} yields
	\begin{align*}
		\lnorm{Xf}_{L^1(\Gamma\times\sph)} & \leq \frac{1}{\pi^2} \int_{-\pi}^{\pi}
		\lnorm{\frac{ 1}{\sqrt{1-(\cdot)^2}}}_{L^q(T)} \lnorm{f_{\theta}}_{L^p(\OM)} d \theta 
		= \frac{2}{\pi} \lnorm{\frac{ 1}{\sqrt{1-(\cdot)^2}}}_{L^q( -1,1)}
		\lnorm{f}_{L^p(\OM)} .
	\end{align*}

\end{proof}


\begin{lemma}\label{lem:evenness}
	Let $g$ be an integrable function on the torus satisfying the symmetry relation
	\begin{align}\label{sym*1}
	g (e^{i \beta},e^{i \theta})=g (e^{i(2\theta -\beta-\pi)},e^{i( \theta+\pi)}),\text{ for } (e^{i \beta},e^{i \theta})\in\Gam\times\sph,
	\end{align}
		and $g_{n,k}$'s 
		be its Fourier coefficients. Then 
	\begin{align}\label{FourierEvenness_cond}
	g_{n,k}=(-1)^{n+k}g_{n+2k,-k}, \mbox{ for all }n,k\in\BZ.
	\end{align} 
\end{lemma}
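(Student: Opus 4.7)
The plan is to prove this by direct substitution of the symmetry relation into the Fourier coefficient integral followed by a change of variables on the torus. Starting from
\[
g_{n+2k,-k} = \frac{1}{(2\pi)^2}\int_{-\pi}^{\pi}\int_{-\pi}^{\pi} g(e^{i\beta},e^{i\theta})\, e^{-i(n+2k)\theta}\,e^{ik\beta}\,d\theta\,d\beta,
\]
I would apply the symmetry \eqref{sym*1} to rewrite $g(e^{i\beta},e^{i\theta})$ as $g(e^{i(2\theta-\beta-\pi)},e^{i(\theta+\pi)})$, and then change variables to $\theta' := \theta+\pi$ and $\beta' := 2\theta-\beta-\pi$. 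This map is a $C^\infty$ bijection of the torus onto itself (with inverse $\theta=\theta'-\pi$, $\beta = 2\theta'-\beta'-3\pi$), and the Jacobian has absolute value $1$, so the integral over $(-\pi,\pi]^2$ is preserved.

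The heart of the calculation is tracking the exponential. Under this change of variables, the exponent $-i(n+2k)\theta+ik\beta$ transforms as
\[
-i(n+2k)(\theta'-\pi)+ik(2\theta'-\beta'-3\pi) = -in\theta' - ik\beta' + i\pi(n+2k) - 3ik\pi,
\]
and the phase factor $e^{i\pi(n+2k-3k)} = e^{i\pi(n-k)} = (-1)^{n+k}$ comes out of the integral. What remains inside is exactly the integrand defining $g_{n,k}$ (in the new variables $\theta',\beta'$), so we obtain $g_{n+2k,-k} = (-1)^{n+k} g_{n,k}$, which is equivalent to \eqref{FourierEvenness_cond} since $(-1)^{n+k}$ is its own inverse.

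There is no real obstacle here; the only things to verify carefully are that the change of variables is measure-preserving on the torus (using $2\pi$-periodicity to justify that shifted fundamental domains give the same integral) and that the phase factor simplifies correctly modulo $2$. I would also remark at the end that the relation is trivially symmetric in the pair $(n,k) \leftrightarrow (n+2k,-k)$ since applying the transformation twice returns to the original indices, consistent with the symmetry \eqref{sym*1} being an involution.
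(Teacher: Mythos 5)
Your proposal is correct and follows essentially the same route as the paper's proof: substitute the symmetry relation into the Fourier coefficient integral, change variables via $\theta\mapsto\theta+\pi$ and $\beta\mapsto 2\theta-\beta-\pi$, extract the phase $(-1)^{n+k}$, and invoke $2\pi$-periodicity to restore the fundamental domain. The only cosmetic difference is that you start from $g_{n+2k,-k}$ and perform the substitution in one step, whereas the paper starts from $g_{n,k}$ and splits it into two successive changes of variables; the computations are identical in substance.
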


\begin{proof}
	
	Indeed,
	\begin{align*}
	g_{n,k} &= \frac{1}{(2 \pi)^2} \int_{-\pi}^{\pi} \int_{-\pi}^{\pi} g(e^{i \beta}, e^{i\theta}) e^{-i n \theta}  e^{-i k \beta} d \theta d \beta \\
	&\xlongequal{  \eqref{sym*1} } 
	\frac{1}{(2 \pi)^2} \int_{-\pi}^{\pi} \int_{-\pi}^{\pi} g (e^{i(2\theta -\beta-\pi)},e^{i( \theta+\pi)}) e^{-i n \theta}  e^{-i k \beta} d \theta d \beta \\
	& \xlongequal{ \gamma = \theta +\pi } (-1)^{ n}
	\frac{1}{(2 \pi)^2} \int_{-\pi}^{\pi} \int_{0}^{2\pi} g (e^{i(2\gamma -\beta-3\pi)},e^{i \gamma})  e^{-i n \gamma}  e^{-i k \beta} d \gamma d \beta \\
	&\xlongequal{ \text{periodicity} }  (-1)^{ n}
	\frac{1}{(2 \pi)^2} \int_{-\pi}^{\pi} \int_{-\pi}^{\pi} g (e^{i(2\gamma -\beta-\pi)},e^{i \gamma})  e^{-i n \gamma}  e^{-i k \beta} d \gamma d \beta \\
	&\xlongequal{ \alpha = 2\gamma -\beta-\pi}  (-1)^{ n}
	\frac{1}{(2 \pi)^2} \int_{-\pi}^{\pi} \int_{2\gamma}^{2\gamma-2\pi} g (e^{i \alpha},e^{i \gamma})  e^{-i k (2\gamma-\alpha-\pi)} e^{-i n \gamma} (-d \alpha) d \gamma \\
	&=  (-1)^{ n+k}	 \frac{1}{(2 \pi)^2} \int_{-\pi}^{\pi} \int_{2\gamma-2\pi}^{2\gamma} g (e^{i \alpha},e^{i \gamma})   e^{-i (n+2k)  \gamma} e^{i k \alpha} d \alpha d \gamma \\
	&\xlongequal{ \text{periodicity} } 
	(-1)^{ n+k}
	\frac{1}{(2 \pi)^2} \int_{-\pi}^{\pi}   \int_{-\pi}^{\pi} g (e^{i \alpha},e^{i \gamma})    e^{-i (n+2k)  \gamma} e^{i k \alpha} d \alpha  d \gamma \\
	&=(-1)^{n+k}g_{n+2k,-k}.
	\end{align*} 
\end{proof}

The following result is used in the proof of Theorem \ref{newmapping_Hilbert}.
\begin{lemma}\label{singularintegral}
	For $\beta \in (-\pi,\pi],$
	\begin{align}\label{Fmodes}
		\frac{1}{\pi} \int_{-\pi}^{\pi} \frac{e^{in\alpha}}{e^{i\alpha}-e^{i\beta}} d \alpha=
		\left\{
		\begin{array}{ll}e^{i(n-1)\beta}&\mbox{ if } n\geq 1,\\
			-e^{i(n-1)\beta}&\mbox{ if } n\leq 0.
		\end{array}
		\right.
	\end{align}
\end{lemma}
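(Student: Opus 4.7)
The integrand has a non-integrable singularity at $\alpha=\beta$ (it behaves like $1/(\alpha-\beta)$ there), so the integral has to be read as a Cauchy principal value. The plan is to convert it to a contour integral on $\Gam$ via the substitution $\zeta = e^{i\alpha}$, $d\alpha = d\zeta/(i\zeta)$, which rewrites the left-hand side as
\[
    \frac{1}{i\pi}\,\mathrm{P.V.}\!\oint_{\Gam}\frac{\zeta^{n-1}}{\zeta-\xi}\,d\zeta, \qquad \xi:=e^{i\beta}\in\Gam,
\]
and then evaluate it through the Sokhotski--Plemelj jump formula.

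Concretely, for $z\notin\Gam$ set $I_n(z):=\frac{1}{i\pi}\oint_{\Gam}\frac{\zeta^{n-1}}{\zeta-z}\,d\zeta$. Since the density $\zeta\mapsto\zeta^{n-1}$ is smooth on $\Gam$, Plemelj's formula yields
$\mathrm{P.V.}\,I_n(\xi) = \tfrac{1}{2}\bigl(I_n^-(\xi)+I_n^+(\xi)\bigr),$
where $I_n^\pm(\xi)$ are the non-tangential limits from $\OM$ and from $\BC\setminus\overline{\OM}$ respectively. I will compute each of these via the residue theorem, splitting into two cases according to whether $\zeta^{n-1}$ carries an interior pole at $\zeta=0$.

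For $n\ge 1$ the only interior pole of $\zeta^{n-1}/(\zeta-z)$ is the simple pole at $\zeta=z$, so $I_n^-(\xi)=2\xi^{n-1}$ and $I_n^+(\xi)=0$, whose average is $e^{i(n-1)\beta}$. For $n\le 0$ there is an additional pole of order $1-n$ at $\zeta=0$. The geometric-series expansion $(\zeta-z)^{-1}=-\sum_{k\ge 0}\zeta^k/z^{k+1}$ identifies its residue at the origin as the coefficient of $\zeta^{-n}$, namely $-z^{n-1}$. Hence for $|z|<1$ the residues at $\zeta=z$ and $\zeta=0$ cancel, giving $I_n^-(\xi)=0$, while for $|z|>1$ only the residue at $0$ survives, giving $I_n^+(\xi)=-2\xi^{n-1}$; the average is $-e^{i(n-1)\beta}$.

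The only mildly delicate point is the residue at the possibly high-order pole $\zeta=0$ when $n\le 0$, but the series expansion above yields its value immediately without any iterated differentiation. The rest — the change of variables, the principal-value interpretation at the boundary pole, and Plemelj's formula for smooth densities on a smooth closed curve — is standard.
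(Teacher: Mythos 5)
Your proof is correct and rests on the same machinery as the paper's: the Sokhotski--Plemelj formula applied to the Cauchy integral of the density $\zeta^{n-1}$ on the unit circle, with the two cases $n\geq 1$ and $n\leq 0$ distinguished by the pole at the origin. The only (cosmetic) difference is that you use the symmetric form of Plemelj (principal value equals the average of the interior and exterior limits, each computed by residues), whereas the paper takes one-sided limits of the interior Cauchy formula for $z^{n}$ and the exterior Cauchy formula for $z^{-(n+1)}$.
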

\begin{proof}
	Let  $\OM = \{ z  \in \BC : |z| <1  \}$ be the complex unit disc, and $\Gam = \{ z  \in \BC : |z| =1  \}$ be  its boundary.
	\begin{enumerate}
		\item[I.] Let $n \geq 0$.
		For $z \in \OM$, apply the Cauchy integral formula to $z \mapsto z^{n}$, to get 
		\begin{align}\label{cauchy_npos}
			z^{n} = \frac{1}{2\pi i} \int_{\Gam} \frac{\zeta^{n}}{\zeta - z} d \zeta.
		\end{align}
		Let $z \mapsto z_0\in \Gam$, with $|z|<1$, and apply Plemelj-Sokhotski formula (e.g \cite[Chap. 2, Section 17]{muskhellishvili}) on both sides of \eqref{cauchy_npos},
		\begin{align*}
			z_0^{n} &= \frac{1}{2} z_0^{n}+ \frac{1}{2\pi i} \int_{\Gam} \frac{\zeta^{n}}{\zeta - z_0} d \zeta, \\
			z_0^{n} &=  \frac{1}{\pi i} \int_{\Gam} \frac{\zeta^{n}}{\zeta - z_0} d \zeta,
		\end{align*} where the integral is understood in the Cauchy principal value sense.
		Now parametrize the unit circle $\del \OM$.
		In parametric form, let $z_0= e^{i \beta}$, $\zeta = e^{i \alpha}$, then $d \zeta = i e^{i \alpha} d \alpha$, and 
		\begin{align*}
			e^{i n \beta}&=  \frac{1}{\pi i} \int_{-\pi}^{\pi} \frac{e^{i n \alpha} i e^{i \alpha}}{e^{i \alpha} - e^{i \beta}} d \alpha = \frac{1}{\pi } \int_{-\pi}^{\pi} \frac{e^{i (n+1) \alpha} }{e^{i \alpha} - e^{i \beta}} d \alpha, \quad n \geq 0,
		\end{align*}
		Thus, 
		\begin{align*}
			e^{i (n-1) \beta}&=  \frac{1}{\pi } \int_{-\pi}^{\pi} \frac{e^{i n \alpha} }{e^{i \alpha} - e^{i \beta}} d \alpha, \quad n \geq 1.
		\end{align*}
		
		\item[II.] 
		For  $n\geq 0$, we apply the Cauchy integral formula to  $z\mapsto z^{-(n+1)}$ analytic in $(\BC \cup \INF) -\OM$,
		\begin{align}\label{cauchy_ext}
			z^{-(n+1)} = -\frac{1}{2\pi i} \int_{\Gam} \frac{\zeta^{-(n+1)}}{\zeta-z } d \zeta,
		\end{align} where the integral is counter-clockwise, hence the (-) sign to account for the exterior of the disc.
		Let $z \mapsto z_0\in \Gam$, with $|z|>1$, and apply the exterior Plemelj-Sokhotski formula (e.g \cite[Chap. 2, Section 17]{muskhellishvili}) on both sides of \eqref{cauchy_ext}, to get
		\begin{align} \nonumber
			z_0^{-(n+1)} &= -\left[-\frac{1}{2} z_0^{-(n+1)}+  \frac{1}{2\pi i} \int_{\Gam} \frac{z^{-(n+1)}}{z- z_0 } d z \right], \\ \label{blah}
			-z_0^{-(n+1)} &= \frac{1}{\pi i} \int_{\Gam} \frac{\zeta^{-(n+1)}}{\zeta- z_0 } d \zeta, 
		\end{align} where the integral is understood in the Cauchy principal value sense.
		Now parametrize the unit circle by $\zeta = e^{i \alpha}$, and let $z_0 = e^{i \beta}$. Then \eqref{blah} becomes 
		\begin{align*}
			-e^{-i (n+1) \beta}&=  \frac{1}{\pi i} \int_{-\pi}^{\pi} \frac{e^{-i (n+1) \alpha} i e^{i \alpha}}{e^{i \alpha} - e^{i \beta}} d \alpha = \frac{1}{\pi } \int_{-\pi}^{\pi} \frac{e^{-i n \alpha} }{e^{i \alpha} - e^{i \beta}} d \alpha, \quad n \geq 0.
		\end{align*}
		The change $n \to -n$, in the above integral show \eqref{Fmodes}  for $n \leq 0$.
	\end{enumerate}
\end{proof}

The following result is a direct consequence of  the uniform convergence. Recall that 
$\sph$ denotes the unit circle.
\begin{lemma}\label{lem_nfn} 
	Let $(X,\lnorm{\cdot})$ be a Banach space, $\{a_n\}_{n \in \BZ}$ be a sequence in $X$, and $p\geq 0$ integer.
If $\ds \sum_{n \in \BZ} \jpn^p \lnorm{ a_n} < \INF$, then $ \ds \sph \ni \zeta \mapsto  \sum_{n \in \BZ} a_n \zeta^{ n}$ defines a $C^p$ map on $\sph$  with values in $X$.
\end{lemma}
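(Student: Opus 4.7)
The plan is to pass from the circle $\mathbb{S}^1$ to the real line via the parametrization $\zeta=e^{i\theta}$, and reduce the statement to the classical Banach-space-valued version of the theorem on term-by-term differentiation of a uniformly convergent series.

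First I would define $F(\theta):=\sum_{n\in\mathbb{Z}} a_n e^{in\theta}$ and check that $F$ is well defined and continuous. Since $\|a_n e^{in\theta}\|=\|a_n\|$ and $\sum_{n}\|a_n\|\le \sum_n \langle n\rangle^p\|a_n\|<\infty$, the Weierstrass $M$-test gives absolute and uniform convergence on $\mathbb{R}$, so $F:\mathbb{R}\to X$ is continuous and $2\pi$-periodic, and hence descends to a continuous map $\mathbb{S}^1\to X$.

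Next, for each integer $1\le k\le p$, I would consider the formally differentiated series
\[
F_k(\theta):=\sum_{n\in\mathbb{Z}} (in)^k a_n e^{in\theta}.
\]
Because $|n|^k\le \langle n\rangle^p$ for $k\le p$, the hypothesis gives $\sum_{n}\|(in)^k a_n\|\le \sum_{n}\langle n\rangle^p\|a_n\|<\infty$, so again by the $M$-test each $F_k$ converges absolutely and uniformly on $\mathbb{R}$ and defines a continuous $X$-valued map.

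Finally, I would apply the standard theorem from vector-valued calculus: if a sequence of $C^1$ maps into a Banach space converges pointwise and their derivatives converge uniformly, then the limit is $C^1$ and its derivative is the limit of the derivatives. Applying this to the partial sums $S_N(\theta)=\sum_{|n|\le N} a_n e^{in\theta}$ shows $F$ is $C^1$ with $F'=F_1$. Iterating this argument $k$ times for $k=2,\dots,p$ (at each stage the partial sums' $k$-th derivatives converge uniformly to $F_k$ by the previous paragraph) yields $F\in C^p(\mathbb{R};X)$ with $F^{(k)}=F_k$. Periodicity then transfers this to $C^p$ on $\mathbb{S}^1$. No step is an essential obstacle; the only care needed is invoking the Banach-space version of the term-by-term differentiation theorem, which proceeds exactly as in the scalar case since the proof relies only on the fundamental theorem of calculus in $X$ and uniform convergence estimates.
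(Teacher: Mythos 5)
Your argument is correct and is exactly the standard uniform-convergence proof that the paper has in mind: the paper offers no written proof of Lemma \ref{lem_nfn}, dismissing it as ``a direct consequence of the uniform convergence,'' and your Weierstrass $M$-test bound $|n|^k\le\langle n\rangle^p$ for $k\le p$ together with the Banach-space-valued term-by-term differentiation theorem is precisely the omitted argument. No gaps.
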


For the following result, we recall some of the spaces in \eqref{spaces}, for $0<\mu<1$, $p=1,2$:
\begin{equation}\label{spaces1}
\begin{aligned} 
l^{1,p}_{\INF}(\Gam) &:= \left \{ \bg:=\langle  
g_{0}, g_{-1} , g_{-2}, \cdots  \rangle\; : \lnorm{\bg}_{l^{1,p}_{\INF}(\Gam)}:= \sup_{\xi \in \Gam}\sum_{j=0}^{\INF}  \jpj^p \lvert g_{-j}(\xi) \rvert < \INF \right \},\\
C^{\mu}(\Gam; l_1) &:= \left \{ \bg:=\langle  
g_{0}, g_{-1} , g_{-2}, \cdots  \rangle:
\sup_{\xi\in \Gam} \lVert \bg(\xi)\rVert_{\ds l_{1}} + \underset{{\substack{
			\xi,\eta \in \Gam \\
			\xi\neq \eta } }}{\sup}
\frac{\lVert \bg(\xi) - \bg(\eta)\rVert_{\ds l_{1}}}{|\xi - \eta|^{ \mu}} < \INF \right \}, 
\end{aligned}
\end{equation} where, for brevity, we use the notation $\jpj=(1+|j|^2)^{1/2}$.

\begin{lemma}\label{prop:bg_gnk}  
	Let $\{g_{-n,k}\}_{n \geq 0,  k\in \BZ}$ be a double sequence  satisfying  the decay
	\begin{align}\label{gnk_decay_1}
	\sum_{n=0}^{\INF}  \jpn^{2} \sum_{k=-\INF}^{\INF} \lvert g_{-n,k} \rvert < \INF, \quad \text{and} \quad
	\sum_{k=-\INF}^{\INF}  \jpk^{1+\mu} 
	\sum_{n=0}^{\INF} \lvert g_{-n,k} \rvert < \INF,  \; \text{for some} \; 0\leq \mu \leq1.
	\end{align}
    Let  $\ds  \bg( \zeta) :=\langle  
	g_{0}( \zeta ), g_{-1}(\zeta ), g_{-2}(\zeta ), \cdots \cdot \rangle,$ for $\zeta \in \Gam=\{z: |z|=1\}$, and where for each $n \geq 0$,  
	$\ds g_{-n}(\zeta):=\sum_{k=-\infty}^\infty g_{-n,k} \, \zeta^{ k }.$
	Then $\bg \in l^{1,2}_{\INF}(\Gam) \cap C^{1,\mu}(\Gam; l^1)$.
\end{lemma}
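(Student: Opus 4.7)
The two decay hypotheses in \eqref{gnk_decay_1} are tailored exactly to supply Weierstrass M-test bounds for the three series estimates we need. The plan is to verify $\bg \in l^{1,2}_\INF(\Gam)$ directly, then establish $\bg \in C^1(\Gam; l^1)$ by term-by-term differentiation in the boundary variable, and finally control the $\mu$-H\"older modulus of the derivative.

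First, since $|\zeta^k| = 1$ on $\Gam$, the triangle inequality gives $|g_{-n}(\zeta)| \leq \sum_{k \in \BZ} |g_{-n,k}|$, so
$$
\sup_{\zeta \in \Gam} \sum_{n=0}^\INF \jpn^2 |g_{-n}(\zeta)| \;\leq\; \sum_{n=0}^\INF \jpn^2 \sum_{k=-\INF}^\INF |g_{-n,k}| \;<\; \INF
$$
by the first assumption in \eqref{gnk_decay_1}. This gives $\bg \in l^{1,2}_\INF(\Gam)$.

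Next, parametrize $\zeta = e^{i\beta}$ and formally differentiate: $\frac{d}{d\beta} g_{-n}(e^{i\beta}) = \sum_k i k \, g_{-n,k} e^{ik\beta}$. Since $|k| \leq \jpk \leq \jpk^{1+\mu}$, the second assumption in \eqref{gnk_decay_1} yields
$$
\sum_{n=0}^\INF \sum_{k=-\INF}^\INF |k|\, |g_{-n,k}| \;\leq\; \sum_{k=-\INF}^\INF \jpk^{1+\mu} \sum_{n=0}^\INF |g_{-n,k}| \;<\; \INF,
$$
which justifies termwise differentiation and shows the derivative series converges absolutely in $l^1$, uniformly in $\beta$. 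Hence $\bg \in C^1(\Gam; l^1)$.

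Finally, for the H\"older bound on $\bg'$, apply the elementary inequality $|e^{ikt_1} - e^{ikt_2}| \leq 2^{1-\mu} |k|^\mu |t_1-t_2|^\mu$ (valid for every $\mu \in [0,1]$, from interpolating the trivial bound $2$ with the Lipschitz bound $|k||t_1-t_2|$). This gives
$$
\bigl\| \bg'(e^{i\beta_1}) - \bg'(e^{i\beta_2}) \bigr\|_{l^1} \;\leq\; 2^{1-\mu} |\beta_1-\beta_2|^\mu \sum_{n=0}^\INF \sum_{k=-\INF}^\INF |k|^{1+\mu} |g_{-n,k}|,
$$
and the double sum is controlled by $C \sum_k \jpk^{1+\mu} \sum_n |g_{-n,k}| < \INF$, again by the second assumption. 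Since arclength and chord length on $\Gam$ are comparable ($|\beta_1-\beta_2| \leq \frac{\pi}{2} |e^{i\beta_1} - e^{i\beta_2}|$ for nearby points), the H\"older estimate transfers from $\beta$ to $\zeta$, giving $\bg \in C^{1,\mu}(\Gam; l^1)$.

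There is no genuine obstacle here: every step is a Weierstrass M-test argument, and the two assumptions in \eqref{gnk_decay_1} are matched precisely to the $\jpn^2$ weight in the $l^{1,2}_\INF$ norm and the $\jpk^{1+\mu}$ weight needed for H\"older regularity of the $\beta$-derivative, respectively.
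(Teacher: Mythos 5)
Your proof is correct, and on the final step it takes a genuinely different (and arguably tighter) route than the paper. The first two stages coincide in substance with the paper's: the $l^{1,2}_{\INF}(\Gam)$ bound is the same triangle-inequality estimate, and the $C^1$ regularity is obtained in the paper by packaging the columns $\bF(k)=\{g_{-n,k}\}_{n\geq 0}$ as an $l^1$-valued Fourier series and invoking Lemma \ref{lem_nfn} with $X=l^1$, which is the same Weierstrass M-test you run by hand. Where you diverge is the H\"older seminorm of the derivative: the paper deduces $C^{1,\mu}$ by interpolating between the function spaces $C^{1}(\Gam;l^1)$ and $C^{1,1}(\Gam;l^1)$, whereas you interpolate pointwise at the level of each Fourier mode via $\lvert e^{ikt_1}-e^{ikt_2}\rvert\leq 2^{1-\mu}\lvert k\rvert^{\mu}\lvert t_1-t_2\rvert^{\mu}$ and then sum against the $\jpk^{1+\mu}$ weight. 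Your version has a concrete advantage: the paper's $C^{1,1}$ endpoint would require $\sum_{k}\jpk^{2}\sum_{n}\lvert g_{-n,k}\rvert<\INF$, which is not among the hypotheses when the given $\mu$ is strictly less than $1$, so the function-space interpolation as stated is loose; your modewise interpolation uses exactly the available $\jpk^{1+\mu}$ summability and nothing more. The only points worth making explicit in a final write-up are (a) that the chord--arc comparison $\lvert\beta_1-\beta_2\rvert\leq\frac{\pi}{2}\lvert e^{i\beta_1}-e^{i\beta_2}\rvert$ requires choosing the angular difference in $(-\pi,\pi]$, which you do implicitly, and (b) that the $C^{1,\mu}(\Gam;l^1)$ norm also involves $\sup_{\zeta}\lVert\bg(\zeta)\rVert_{l^1}$ and $\sup_{\zeta}\lVert\bg'(\zeta)\rVert_{l^1}$, both of which are already covered by your first two estimates since $\jpn^{2}\geq 1$.
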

\begin{proof}
	The fact that $\bg \in l^{1,2}_{\INF}(\Gam)$ follows from the first bound in \eqref{gnk_decay_1}:
	\begin{align*}
	\lnorm{\bg}_{l^{1,2}_{\INF}(\Gam)} = \sup_{\zeta  \in \Gam}
	\sum_{n=0}^{\INF}   \jpn^2 \lvert g_{-n}( \zeta ) \rvert 
	\leq  \sum_{n=0}^{\INF} \jpn^2  \sum_{k=-\INF}^{\INF} \left \lvert g_{-n,k} \right \rvert
	< \INF.
	\end{align*}
	We prove that $\bg \in C^{1,\mu}(\Gam; l^1)$ by interpolation between  $C^{1}(\Gam; l^1)$, and  $C^{1,1}(\Gam; l^1)$. 
	For each $k \in \BZ$, consider the $l^1$ sequence  $\bF(k) := \{g_{-n,k}\}_{n \geq0}$.
	By the second bound in \eqref{gnk_decay_1}, we have 
	\begin{align*}
	\sum_{k\in \BZ}  \jpk^{1+\mu} \lnorm{\bF(k)}_{l^1} = \sum_{k=-\INF}^{\INF}   \jpk^{1+\mu} \sum_{n=0}^{\INF} \left \lvert g_{-n,k} \right \rvert
	< \INF.
	\end{align*}

Since $\{\jpk^{1+\mu} \bF(k) \}_{k \in \BZ}  \in l^1(\BZ_k;l^1(\BN_n))$, by applying Lemma \ref{lem_nfn} for $X=l^1(\BN_n)$, and $p = 1+\mu $, with $\mu =0$, we conclude  $\bg \in C^{1}(\Gam; l^1)$, whereas with $\mu =1$, we obtain  $\bg \in C^{1,1}(\Gam; l^1)$.
The result for $0 < \mu <1$ follows by interpolation.

\end{proof}

\begin{prop}\label{HLmomentsApp}[GGHL- moment conditions]
Let $f\in L^1(\OM)$, and $p\in\BZ$ with $p\geq 0$. Then, 
\begin{equation} \label{eq:Moment_OrthogonalityApp}
\int_{-\pi}^{\pi} e^{i m \omega } \int_{-1}^{1} s^{p} Rf (s, e^{i \omega}) \, ds \, d \omega = 0, \text { for all }
\end{equation}

(i) $m\in\BZ$ with $p-m$ odd, or

(ii) $m\in \BZ$ with $|m|>p$ and $p-m$ even.
\end{prop}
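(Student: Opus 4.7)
\textbf{Proof plan for Proposition \ref{HLmomentsApp}.} The plan is to reduce the proposition to the classical observation that $P_p(\bomega) := \int_{-1}^{1} s^p Rf(s,\bomega)\, ds$ is a homogeneous polynomial of degree $p$ in the Cartesian components of $\bomega$, and then to read off the Fourier modes in $\omega$. The main technical point is merely bookkeeping of parities; no analytic subtlety beyond Fubini is involved.

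First, using Fubini (justified by $f\in L^1(\OM)$, $\supp f\subset\OM$, and $|s|^p\leq 1$ on $(-1,1)$) and the orthonormal change of variables $x=s\bomega+t\bomega^\perp$ whose Jacobian is $1$, I would rewrite
\begin{equation*}
P_p(\bomega)\;=\;\int_{-1}^{1}\!\!\int_{-1}^{1} s^{p} f(s\bomega+t\bomega^{\perp})\, dt\, ds \;=\;\int_{\BR^{2}} (x\cdot\bomega)^{p}\, f(x)\, dx.
\end{equation*}
Expanding the binomial $(x\cdot\bomega)^p=\sum_{j=0}^{p}\binom{p}{j}(x_1\cos\omega)^j(x_2\sin\omega)^{p-j}$ exhibits $P_p$ as a homogeneous polynomial of degree $p$ in $(\cos\omega,\sin\omega)$, with coefficients $\int f(x)\, x_1^j x_2^{p-j}\,dx$ (finite since $f\in L^1$ and $|x_j|\leq 1$ on $\OM$).

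Next I would substitute $\cos\omega=\tfrac12(e^{i\omega}+e^{-i\omega})$ and $\sin\omega=\tfrac{1}{2i}(e^{i\omega}-e^{-i\omega})$ into each monomial $(\cos\omega)^{j}(\sin\omega)^{p-j}$. Since this yields a product of exactly $p$ factors of the form $e^{\pm i\omega}$, the resulting linear combination of complex exponentials $e^{ik\omega}$ involves only indices $k\in\{-p,-p+2,\ldots,p-2,p\}$, i.e.\ $|k|\leq p$ and $p-k$ even. Summing over $j$, I obtain
\begin{equation*}
P_p(\omega)=\sum_{\substack{|k|\leq p\\ p-k\text{ even}}} c_k\, e^{ik\omega}
\end{equation*}
for some coefficients $c_k\in\BC$.

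Finally, computing
\begin{equation*}
\int_{-\pi}^{\pi} e^{im\omega}\, P_p(\omega)\, d\omega \;=\; 2\pi\, c_{-m},
\end{equation*}
I conclude that the left-hand side of \eqref{eq:Moment_OrthogonalityApp} vanishes unless both $|{-m}|\leq p$ and $p-(-m)=p+m$ is even. Since $p+m$ and $p-m$ have the same parity, this exactly means the integral vanishes whenever either (i) $p-m$ is odd, or (ii) $|m|>p$ (in particular when additionally $p-m$ is even), proving the proposition. The only place where one could slip is the parity count in the exponential expansion; beyond that the argument is purely mechanical.
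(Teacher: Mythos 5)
Your proposal is correct and follows essentially the same route as the paper's proof: rewrite $\int_{-1}^{1}s^{p}Rf(s,\bomega)\,ds$ as $\int (x\cdot\bomega)^{p}f(x)\,dx$, expand binomially in $(\cos\omega,\sin\omega)$, pass to complex exponentials, and observe that the resulting Fourier support in $\omega$ is contained in $\{k:|k|\leq p,\ p-k\ \text{even}\}$ (the paper phrases this as $\zeta^{p}\int s^{p}Rf(s,\zeta)\,ds$ being an even polynomial of degree $2p$ in $\zeta$, which is the same observation). The parity bookkeeping and the final identification $\int e^{im\omega}P_p(\omega)\,d\omega=2\pi c_{-m}$ are both correct, so no gap remains.
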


\begin{proof}

\begin{equation*} 
\begin{aligned}
\int_{-1}^1s^{p} Rf (s, e^{i\omega})ds &= \int_{-1}^1 s^{p} \int_{-1}^{1} f(s e^{i\omega} +t  e^{i(\omega+\pi/2)}) dt ds 
\\
& = \int_{-1}^1\int_{-1}^1 (x_1 \cos\omega +x_2 \sin\omega)^p f(x_1,x_2) dx_1 dx_2 \\
&= \sum_{k=0}^{p} \binom{p}{j} \left( \int_{-1}^1\int_{-1}^1 f(x_1,x_2) x_1^{p-j} x_2^j \, dx_1 \, dx_2 \right )(\cos \omega)^{p-k} (\sin\omega)^k \\
&= \sum_{k=0}^{p} f_{p,k} \, \left(\frac{e^{i\omega}+e^{-i\omega}}{2}\right)^{p-k}  \left(\frac{e^{i\omega}-e^{-i\omega}}{2i}\right)^k \\
&= 2^{-p}e^{-i \omega p }\sum_{j=0}^p (-i)^k f_{p,k}  \left( (e^{i \omega})^2+ 1 \right)^{p-k} \left( (e^{i \omega})^2 - 1 \right)^{k} \\
&=  2^{-p} e^{-ip\omega}\sum_{j=0}^{p}  (-i)^k f_{p,k}  Q_{p,k} (e^{2i\omega}),
\end{aligned}
\end{equation*}
where $Q_{p.k} (t) =  \left( t+ 1 \right)^{p-k} \left( t - 1 \right)^k$, and $\ds f_{p,k} =  \binom{p}{k}  \int_{-1}^1\int_{-1}^1 f(x_1,x_2) x_1^{p-k} x_2^k \, dx_1 \, dx_2.\\$

Thus,  the map $\ds\zeta\mapsto \zeta^{p}\int_{\BR}s^{p} Rf (s, \zeta)ds$ is a polynomial of degree $2p$ in $\zeta\in\BC$ with even powers only.  
In particular, we get the orthogonality (in $L^2(\sph)$) conditions:
\begin{equation*}
\begin{aligned}
\int_{-\pi}^{\pi} e^{i \omega (p-n)} \int_{-1}^{1} s^{p} Rf (s, e^{i \omega}) \, ds \, d \omega = 0, \; 
\left\{
                \begin{array}{ll}
                  \text{for} \; n <0, \text{ or } n>2p,\\
                  \text{or for} \; 1 \leq n \leq 2p-1, \; \text{and} \; n \;\text{odd}.
                \end{array}
              \right.
\end{aligned}
\end{equation*}
By setting $m = p-n$, 

\begin{equation*} 
\begin{aligned}
\int_{-\pi}^{\pi} e^{i m \omega } \int_{-1}^{1} s^{p} Rf (s, e^{i \omega}) \, ds \, d \omega = 0, \; 
\left\{
                \begin{array}{ll}
                  \text{for} \; |m|  >p,\\
                  \text{or for} \;  |m|  \leq p, \; \text{and} \; p-m \;\text{is odd},
                \end{array}
              \right.
\end{aligned}
\end{equation*}
The last conditions are the same as in \eqref{eq:Moment_OrthogonalityApp}.

\end{proof}


\end{document}